\documentclass[11pt]{amsart}
\usepackage{amsmath,amsthm,verbatim,amssymb,amsfonts,amscd,  graphicx}
\usepackage[normalem]{ulem}
\usepackage{xcolor}
\usepackage{graphics}

\usepackage{euscript, enumerate}
\usepackage{url,hyperref}

\oddsidemargin0.0cm
\evensidemargin0.0cm
\textwidth16.5cm

\usepackage{hyperref}

\usepackage{amscd}

\synctex=1

\sloppy

\setlength{\marginparwidth}{2cm}

\renewcommand{\thispagestyle}[1]{}
\pagestyle{plain}



\usepackage{color}
\usepackage{geometry}
\geometry{left=2.80cm,right=2.8cm,top=3.5cm,bottom=3.2cm}

\newtheorem{prop}{Proposition}[section]
\newtheorem{theorem}[prop]{Theorem}
\newtheorem{lemma}[prop]{Lemma}
\newtheorem{corollary}[prop]{Corollary}
\newtheorem{remark}[prop]{Remark}

\newtheorem{definition}[prop]{Definition}
\newtheorem{conjecture}[prop]{Conjecture}
\newtheorem{problem}[prop]{Problem}

\def\begeq{\begin{equation}}
\def\endeq{\end{equation}}

\providecommand{\keywords}[1]
{
  \small	
  \textbf{\textit{Keywords---}} #1
}

\title{Geometry of Positive Scalar curvature on Complete Manifold}
 \author{Bo Zhu}
\address[Bo Zhu]{School of Mathematics, University of Minnesota-Twin Cities, MN 55455, USA.}
\email{zhux0629@umn.edu}
\date{\today}

\subjclass[2020]{Primary 53C21
}

\keywords{Positive scalar curvature, Volume growth, Yau's problem, Integral of scalar curvature, Uryson width}
\begin{document}
\maketitle
\begin{abstract}
In this paper, we study the interplay of geometry and positive scalar curvature on a complete, non-compact  manifold with non-negative Ricci curvature. In three-dimensional manifold, we prove a minimal volume growth, an estimate of integral of scalar curvature and width. In higher dimensional manifold, we obtain a volume growth with a stronger condition.
\end{abstract}

\section{Introduction}
An important topic in geometric analysis is to understand the interplay between curvature and geometry. One of the classical and widely popularized results in this aspect is the Myers and Cheng's maximal diameter theorem.

\begin{theorem}[Maximal Diameter Theorem, \cite{cheng,myers}]\label{mdt}
 Any complete Riemannian manifold $(M^n,g) $ with the Ricci curvature $Ric(g) \geq n-1$ has $\mathrm{Diam}(M) \leq \pi$ with equality if and only if $M$ is a round sphere. 
\end{theorem}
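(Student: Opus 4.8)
The plan is to separate the statement into the diameter bound $\mathrm{Diam}(M)\le\pi$ and the rigidity in the equality case. For the bound I would argue by contradiction through the second variation of arc length. Fix $p,q$ realizing $d:=d(p,q)=\mathrm{Diam}(M)$ and a unit-speed minimizing geodesic $\gamma:[0,d]\to M$ from $p$ to $q$; choose a parallel orthonormal frame $e_1,\dots,e_{n-1}$ along $\gamma$ with $e_i\perp\gamma'$, and test the index form
\[
I(V,V)=\int_0^d\big(|V'|^2-\langle R(V,\gamma')\gamma',V\rangle\big)\,dt
\]
on the fields $V_i(t)=\sin(\pi t/d)\,e_i(t)$, which vanish at both endpoints. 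Summing over $i$ turns the curvature term into $\mathrm{Ric}(\gamma',\gamma')\ge n-1$, and a short computation gives $\sum_i I(V_i,V_i)\le(n-1)\tfrac{d}{2}\big(\tfrac{\pi^2}{d^2}-1\big)$, which is negative once $d>\pi$. Since $\gamma$ is minimizing, the index form is positive semidefinite on variations fixing the endpoints, so some $I(V_i,V_i)<0$ is a contradiction; hence $d\le\pi$. A complete manifold of bounded diameter is compact by Hopf--Rinow, so $M$ is closed.

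For the rigidity, assume $\mathrm{Diam}(M)=\pi$, attained by $p,q$, and work with the first nonzero Laplace eigenvalue $\lambda_1(M)$, which is well defined since $M$ is closed. I would squeeze $\lambda_1$ from both sides. On one side, Lichnerowicz's estimate---obtained by integrating the Bochner formula against a first eigenfunction and using $|\mathrm{Hess}\,f|^2\ge\tfrac1n(\Delta f)^2$ together with $\mathrm{Ric}\ge n-1$---gives $\lambda_1(M)\ge n$. On the other side, because $d(p,q)=\pi$ the balls $B_{\pi/2}(p)$ and $B_{\pi/2}(q)$ have disjoint interiors; by Cheng's eigenvalue comparison theorem the first Dirichlet eigenvalue of each is at most that of the metric ball of radius $\pi/2$ in the unit sphere, namely a hemisphere, whose first Dirichlet eigenfunction (the restriction of a height coordinate) has eigenvalue exactly $n$. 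Extending the two Dirichlet eigenfunctions by zero and taking the linear combination orthogonal to the constants yields a test function whose Rayleigh quotient is a weighted average of the two Dirichlet eigenvalues, hence $\le n$, so $\lambda_1(M)\le n$. Therefore $\lambda_1(M)=n$, and Obata's rigidity theorem---equality in Lichnerowicz produces a nonconstant $f$ with $\mathrm{Hess}\,f=-f\,g$---forces $(M,g)$ to be the unit round sphere $S^n$; the reverse implication is the direct computation on $S^n$.

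The main obstacle is the rigidity, and specifically the two comparison inputs feeding the eigenvalue pinching. Cheng's eigenvalue comparison for a metric ball under a Ricci lower bound rests on the Laplacian (Bishop--Gromov) comparison together with a radial symmetrization of eigenfunctions, and it must be run in the barrier sense to absorb the cut locus of $p$; getting these analytic points right is the delicate step. The subsequent Obata step requires tracing the equality case of the Bochner inequality and integrating the resulting Hessian equation $\mathrm{Hess}\,f=-f\,g$ to reconstruct the round metric, which is where the global sphere structure actually emerges. An alternative is to deduce the rigidity from the equality case of the Bishop--Gromov volume comparison applied to the complementary balls $B_r(p)$ and $B_{\pi-r}(q)$, but this route demands the same control of the distance function and the cut locus and is of comparable difficulty.
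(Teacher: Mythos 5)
The paper does not prove this theorem; it cites it as the classical result of Myers and Cheng, and your outline is precisely the standard argument from those references: the second-variation/index-form computation with $\sin(\pi t/d)e_i$ for the bound $\mathrm{Diam}(M)\le\pi$, and Cheng's eigenvalue pinching (Lichnerowicz from below, Dirichlet comparison on the two disjoint half-balls from above) combined with Obata rigidity for the equality case. Your proposal is correct and matches the cited proof.
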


From the perspective of size geometry  \cite{msg}, i.e., diameter, volume,  Uryson width, Filling Radius, injectivity radius, etc. are called the size quantities of a Riemannian manifold. Myers and Cheng's maximal diameter theorem indicates that the positivity of Ricci curvature completely controls its distance spread in all directions, i.e., diameter. Here, we primarily focus on the size of Riemannian manifold and metric structure of Riemannian manifold. A natural problem is that how we could generalize theorem \ref{mdt} to the scalar curvature in some sense. It is clear that positive scalar curvature on a Riemannian manifold can not determine its own distance spread fully. For instance,  $(\mathbb{S}^{2}\times \mathbb{R}^{n-2},g)$ has scalar curvature $2$ but no control of the diameter if $g$ is the standard direct product of Riemannian metric. Based on this basic example, we could never expect that the positivity of scalar curvature on a Riemannian manifold can fully control its size. In fact, the most promising expectation is that the positivity of scalar curvature on a Riemannian manifold should have  control on the size, which is only related to $1$ or $2$ dimensional quantities. In this direction, Gromov conjectures that 

\begin{conjecture}[Gromov, \cite{gromovpsc}]\label{gromovdim}
Let $(M^n,g)$ be a complete non-compact manifold with the scalar curvature $Sc(g) \geq n(n-1)$. Then the macroscopic dimension of $M$ satisfies
$$macrodim(M) \leq n-2.$$ 
\end{conjecture}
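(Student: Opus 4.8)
Gromov's conjecture is one of the central open problems in the subject, so what follows is the strategy I would attempt together with the precise point at which it breaks down, rather than a complete argument. \emph{Reduction to a width estimate.} The plan is to convert the bound $macrodim(M)\le n-2$ into a uniform Uryson width estimate. Recall that $macrodim(M)\le k$ holds once one produces a continuous map from $M$ to a $k$-dimensional simplicial complex all of whose fibers have uniformly bounded diameter; so it suffices to bound the $(n-2)$-th Uryson width $UW_{n-2}(M)$ by a constant depending only on $n$ (after the normalization $Sc(g)\ge n(n-1)$). The model $\mathbb{S}^2\times\mathbb{R}^{n-2}$ shows exactly what such a map should be: projection onto the Euclidean factor, with fibers the bounded round $2$-spheres. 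The whole problem is therefore to show, intrinsically, that positive scalar curvature forces $M$ to be ``thin in two codimensions.''

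\emph{Induction by sweepouts and $\mu$-bubbles.} I would run a downward induction on dimension, with inductive statement ``a complete $V^m$ with $Sc\ge m(m-1)$ has $UW_{m-2}$ bounded by $C(m)$.'' The engine is the scalar-curvature band-width philosophy: minimizing a $\mu$-bubble functional $\mathcal{A}(\Omega)=\mathrm{Area}(\partial\Omega)-\int_\Omega h\,dV$ for a suitable weight $h$ and feeding $Sc>0$ into the second-variation (stability) inequality produces controlled hypersurfaces and a sweepout $M\to\mathbb{R}$ whose level sets $\Sigma^{n-1}$ are tightly constrained. By the Schoen--Yau rewriting of the stability inequality together with a conformal change, each $\Sigma$ carries a metric of positive scalar curvature; applying the inductive hypothesis to $\Sigma^{n-1}$ gives a map $\Sigma\to K^{n-3}$ with bounded fibers, and composing with the sweepout yields $M\to\mathbb{R}\times K^{n-3}$, a map to an $(n-2)$-complex with bounded fibers. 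The base case is $m=2$, where $Sc\ge 2$ is the same as $Ric\ge 1$, so Theorem~\ref{mdt} gives $\mathrm{Diam}(M)\le\pi$, i.e.\ $UW_{0}$ is bounded.

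\emph{Main obstacle.} Making this induction quantitative and global is exactly where all known methods stall, and it is the reason the conjecture is open. Two difficulties stand out. First, the descent needs regularity of the minimizing hypersurfaces, which fails for $n\ge 8$ because of the singular set, so one must either run the Schoen--Yau singular analysis or switch to Dirac-operator / spacetime-harmonic methods, and it is unclear that these track the Uryson width at all; one must also control the loss in the normalization of $Sc$ under each conformal change so that the constants $C(m)$ close up. Second, and more seriously, the $\mu$-bubble estimates control fibers only \emph{locally}: patching the local sweepouts into a single continuous map to an $(n-2)$-dimensional complex over a complete non-compact $M$, without the nerve of the cover inflating the target dimension and without losing the uniform bound on fiber diameters, is the heart of the matter and is not available in general. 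This is precisely why the present paper imposes the extra hypothesis $Ric(g)\ge 0$ (and specializes to $n=3$), which supplies volume comparison and splitting control that substitute for the missing global packaging and let the width and volume-growth estimates go through.
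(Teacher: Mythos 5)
This statement is labeled as a conjecture, and the paper offers no proof of it: the introduction explicitly notes that it ``remains open even for $macrodim(M)\leq n-1$,'' and the conjecture serves only as motivation for the partial results that follow. There is therefore no proof in the paper to compare your attempt against, and you are right not to claim one. Your proposal is an honest strategy sketch with a clearly flagged gap rather than a proof, which is the correct posture here.

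As for the content of your sketch: it is consistent with the known approaches and with what the paper actually does. The reduction of $macrodim(M)\le n-2$ to a uniform bound on $UW_{n-2}$, the $\mu$-bubble/band-width machinery, and the Schoen--Yau descent through stable hypersurfaces are precisely the tools the paper imports (Theorem \ref{toricalband}, Theorem \ref{slb}, and the references to \cite{gromovfourlectures,zhu,cl}). The two obstacles you identify --- regularity of minimizers in high dimension and, more fundamentally, the global patching of local sweepouts into a single $(n-2)$-dimensional map with uniformly bounded fibers over a complete non-compact manifold --- are indeed why the conjecture is open. Your closing observation is also accurate: the paper's Theorem \ref{width} is exactly the conjecture in the special case $n=3$ under the additional hypotheses $Ric(g)\ge 0$ and volume non-collapsing, where the Cheeger--Gromoll splitting, Cheeger--Colding limit-space structure, and the torical band estimate substitute for the missing global packaging. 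One minor caution: your base case identifies $Sc\ge 2$ with $Ric\ge 1$ in dimension $2$, which is fine, but the inductive step's conformal renormalization of the scalar curvature lower bound on the slices is where the constants $C(m)$ are known to degenerate, so even granting regularity the induction does not currently close; it would be worth stating that this is a quantitative failure, not merely a technical one.
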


It remains open even for $marcodim(M) \leq n-1$.  In the view of geometric dimension theory, Conjecture \ref{gromovdim} is equivalent to the statement that $M$ can be approximated by $n-2$ dimensional polyhedrons within finite distance. For the definition of macroscopic dimension, the reader can refer to the references \cite{msg,gromovpsc}, and we will not use the definition of macroscopic dimension directly. However, in the spirit of it, Conjecture \ref{gromovdim} provides the insight for the following conjectures and the results of our paper.
\vskip 2mm

Together with Theorem \ref{mdt}, it is proved that for the conjugate radius $\mathrm{conj}(M)$ of a Riemannian manifold $M$,
\begin{theorem}\label{green}[Maximal Conjugate Radius Theorem,  \cite{green}]
Let $(M^n,g)$ be a closed Riemannian manifold with the scalar curvature $Sc(g) \geq n(n-1)$. Then
$
    \mathrm{conj}(M) \leq \pi,
$
and equality holds if and only $M$ is isometric to the round sphere $\mathbb{S}^n$.
\end{theorem}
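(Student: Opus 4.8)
The plan is to argue by contradiction: assuming $\mathrm{conj}(M)>\pi$, I will derive an upper bound on $\int_M Sc\,dV$ that is incompatible with $Sc\ge n(n-1)$, in the spirit of the integral-geometric refinement of the Bonnet--Myers estimate. The mechanism that lets scalar curvature enter is that, at each point $p$, the average of $Ric(w,w)$ over unit directions $w$ equals $Sc(p)/n$; so a Ricci-type inequality holding \emph{along every geodesic}, once integrated over the unit tangent bundle $SM$ against Liouville measure, upgrades into a statement about the total scalar curvature.

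Fix a unit-speed geodesic $\gamma_v$ with $\gamma_v(0)=p$, $\gamma_v'(0)=v$, and let $A_v(t)$ be the normal Jacobi endomorphism solving $A_v''+R_vA_v=0$ with $A_v(0)=0$, $A_v'(0)=I$, where $R_v(t)w=R(w,\gamma_v'(t))\gamma_v'(t)$ acts on the $(n-1)$-dimensional normal space. Since $\mathrm{conj}(M)>\pi$, $A_v$ is invertible on $(0,\pi]$, so $U_v=A_v'A_v^{-1}$ is a smooth symmetric solution of the matrix Riccati equation $U_v'+U_v^2+R_v=0$ there, with $U_v(t)\sim t^{-1}I$ as $t\to0^+$. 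Writing $u_v=\tr U_v$ and using $\tr(U_v^2)\ge(\tr U_v)^2/(n-1)$ together with $\tr R_v=Ric(\gamma_v',\gamma_v')$, I get the scalar Riccati inequality $u_v'+\tfrac{1}{n-1}u_v^2+Ric(\gamma_v',\gamma_v')\le0$. Now I multiply by $\sin^2 t$, integrate over $[0,\pi]$, and integrate the $u_v'$-term by parts; the boundary contributions vanish because $u_v\sin^2 t\sim(n-1)t\to0$ at $0$ and $\sin\pi=0$ with $u_v(\pi)$ finite. Completing the square in $u_v$ and discarding the nonnegative remainder $\tfrac1{n-1}\int_0^\pi(u_v\sin t-(n-1)\cos t)^2\,dt$ yields, for every $v$,
\begin{equation*}
\int_0^\pi Ric\big(\gamma_v'(t),\gamma_v'(t)\big)\,\sin^2 t\,dt \;\le\; (n-1)\int_0^\pi\cos^2 t\,dt \;=\; \frac{(n-1)\pi}{2}.
\end{equation*}

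Next I integrate this over $v\in SM$ against the Liouville measure $\mu$ and exchange the order of integration. For each fixed $t$, invariance of $\mu$ under the geodesic flow gives $\int_{SM}Ric(\gamma_v'(t),\gamma_v'(t))\,d\mu(v)=\int_{SM}Ric(w,w)\,d\mu(w)$, and the fiberwise identity $\int_{S_pM}Ric(w,w)\,dw=\tfrac1n\mathrm{vol}(\mathbb{S}^{n-1})\,Sc(p)$ turns this into $\tfrac1n\mathrm{vol}(\mathbb{S}^{n-1})\int_M Sc\,dV$. Since $\int_0^\pi\sin^2 t\,dt=\pi/2$ and $\mu(SM)=\mathrm{vol}(\mathbb{S}^{n-1})\,\mathrm{vol}(M)$, the integrated inequality collapses to $\int_M Sc\,dV\le n(n-1)\,\mathrm{vol}(M)$. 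Together with $Sc\ge n(n-1)$ this forces $Sc\equiv n(n-1)$ and, crucially, equality in every step: the discarded square term must vanish, so $u_v(t)=(n-1)\cot t$ for a.e.\ $v$ and all $t\in(0,\pi)$. But $(n-1)\cot t\to-\infty$ as $t\to\pi^-$, contradicting the finiteness of $u_v(\pi)$ guaranteed by $\mathrm{conj}(M)>\pi$. Hence $\mathrm{conj}(M)\le\pi$.

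For the rigidity I run the same computation in the borderline case $\mathrm{conj}(M)=\pi$, where the integrals still converge because $\sin^2 t$ controls the $\cot$-type blow-up of $u_v$ at $\pi$. Equality again forces $u_v=(n-1)\cot t$ and, through the Cauchy--Schwarz step, $U_v=\cot(t)\,I$; feeding this back into the Riccati equation gives $R_v(t)=(\csc^2 t-\cot^2 t)I=I$ along every geodesic, i.e.\ every sectional curvature of $M$ equals $1$. Thus $M$ has constant curvature $1$; being closed it is a spherical space form, and it is the round $\mathbb{S}^n$ in the simply connected case. I expect this last, rigidity, step to be the main obstacle: one must justify the limiting argument at the conjugate locus where $u_v$ is singular, promote the almost-everywhere equalities to genuine pointwise constant curvature, and then pass from a constant-curvature space form to the round sphere itself.
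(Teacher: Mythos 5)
Your argument for $\mathrm{conj}(M)\le\pi$ is correct and is essentially the paper's own proof. The paper obtains the identical per-geodesic bound $\int_0^l\sin^2(\tfrac{\pi}{l}t)\,Ric(\gamma',\gamma')\,dt\le\tfrac{(n-1)\pi^2}{2l}$ from the index form with the test field $\sin(\tfrac{\pi}{l}t)\nu$, $\nu$ parallel (your Riccati computation with the completed square is the standard equivalent derivation of that same inequality), and then performs exactly your global step: integrate over $SM$ against Liouville measure, use invariance under the geodesic flow, and convert $Ric$ to $Sc$ via $\int_{S_pM}Ric(w,w)\,dw=\tfrac1n\mathrm{vol}(\mathbb{S}^{n-1})Sc(p)$. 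The paper runs it directly with $l=\mathrm{conj}(M)$ to get $l\le\pi$, while you run it by contradiction at $l=\pi$; this is a cosmetic difference and both are fine.

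The genuine issue is the rigidity, and you are right to single it out. Equality only yields constant sectional curvature $1$, and that is provably insufficient for the stated conclusion: the round $\mathbb{RP}^n$ has $Sc\equiv n(n-1)$ and $\mathrm{conj}(\mathbb{RP}^n)=\pi$ (conjugate points are governed by the Jacobi equation, which sees only the curvature along each geodesic and is unaffected by the quotient), yet it is not isometric to $\mathbb{S}^n$. So no argument can pass from ``constant curvature $1$'' to ``round sphere'' without an extra hypothesis (simple connectedness) or a finer mechanism than the one either of you uses; the honest rigidity conclusion available from this computation is ``isometric to a spherical space form.'' Be aware that the paper's own treatment does not close this gap either: it asserts that equality forces $\mathrm{Diam}(M)=\mathrm{Inj}(M)=\pi$ and then invokes the maximal diameter theorem, but the claim $\mathrm{Inj}(M)=\pi$ does not follow from the equality analysis (and fails for $\mathbb{RP}^n$). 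So: your inequality proof matches the paper and is complete; your rigidity sketch stops exactly where the paper's does, and the remaining step is not a technicality but a place where the statement itself needs either the simply connected assumption or the Blaschke-type analysis in Green's original paper and the Grove notes cited there.
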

Since  the injectivity radius $\mathrm{inj}(M) \leq \mathrm{conj}(M)$, we obtain that Theorem \ref{green}  implies that for any  closed Riemannian manifold with $Sc(g) \geq n(n-1)$,
$
    \mathrm{Inj}(M) \leq \pi,
$
and equality holds if and only $M$ is isometric to the round sphere $\mathbb{S}^n$, which we would like to name as maximal injectivity radius theorem. For the details of the proof of Theorem \ref{green}, see \cite{green, gk}.

To my best knowledge, it remains open whether maximal conjugate or injectivity radius still hold on any complete non-compact Riemannian manifold. In fact, this question is deeply related to Conjecture \ref{gromovdim}. Here, we apply the techniques used in the case of closed Riemannian manifold to the complete,  non-compact Riemannian manifold $(M^n,g)$ and then obtain a local estimate on the integral of scalar curvature.

\begin{prop} \label{completegreen}
Let $(M^n, g)$ be a complete, non-compact Riemannian manifold. Then,
\begin{itemize}
    \item Suppose that $B(p, R) \subset M$ is a geodesic ball with center $p \in M$ and radius $r >0$.  If $Ric(g) \geq 0$ on $B(p,R)$, then we obtain,
\begin{equation}\label{conjugateiscalarintegral}
 \int_{B(p,R-c)} Sc  \leq n(n-1)(\frac{\pi}{c})^2\mathrm{vol} B(p,R), \forall  c \leq  \mathrm{conj}(M);
\end{equation}
\item If $Sc(g) \geq n(n-1)$ on $M$ and $Ric(g) \geq 0$ on $M$, then $\mathrm{inj}(M) \leq \mathrm{conj}(M) \leq \pi$.
\end{itemize}

\end{prop}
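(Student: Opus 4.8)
The plan is to prove the first item by an integral-geometric averaging of the index form over the unit tangent bundle, and then to deduce the second item from it by a volume-comparison argument. For the first item, fix $c\le\mathrm{conj}(M)$. The basic input is the second variation (index form): since $c\le\mathrm{conj}(M)$, every unit-speed geodesic segment $\gamma:[-c/2,c/2]\to M$ has no interior conjugate points, so its index form is nonnegative on variation fields vanishing at the endpoints. Plugging in $V_i(t)=\cos(\tfrac{\pi t}{c})E_i(t)$ for a parallel orthonormal frame $E_1,\dots,E_{n-1}$ along $\gamma$ orthogonal to $\gamma'$, and summing over $i$ so that the curvature terms trace out the Ricci tensor, I obtain
\[
\int_{-c/2}^{c/2}\cos^2\!\big(\tfrac{\pi t}{c}\big)\,\mathrm{Ric}(\gamma'(t),\gamma'(t))\,dt\ \le\ (n-1)\big(\tfrac{\pi}{c}\big)^2\int_{-c/2}^{c/2}\cos^2\!\big(\tfrac{\pi t}{c}\big)\,dt\ =\ (n-1)\big(\tfrac{\pi}{c}\big)^2\tfrac{c}{2},
\]
the function $\cos(\tfrac{\pi t}{c})$ being the extremizer of the relevant Wirtinger inequality on $[-c/2,c/2]$. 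This step uses only the conjugate-radius bound, not any curvature sign.

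Next I would integrate this inequality over all $(z,w)$ with $z\in B(p,R-\tfrac c2)$ and $w\in S_zM$, against the Liouville measure $d\mu=dw\,dV(z)$. The right-hand side contributes $(n-1)(\pi/c)^2\tfrac{c}{2}\,\omega_{n-1}\,\mathrm{vol}\,B(p,R-\tfrac c2)$. For the left-hand side I would exchange the order of integration and use the geodesic-flow invariance of $d\mu$ to rewrite it as $\int_{SM}\mathrm{Ric}(w,w)\,K(z,w)\,d\mu$, where $K(z,w)=\int_{-c/2}^{c/2}\cos^2(\tfrac{\pi t}{c})\,\mathbf 1[\exp_z(tw)\in B(p,R-\tfrac c2)]\,dt$ is the weighted occupation time of the centered segment inside the chosen ball. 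Since every such segment issuing from $B(p,R-\tfrac c2)$ stays inside $B(p,R)$, the hypothesis $\mathrm{Ric}\ge0$ on $B(p,R)$ makes the whole integrand nonnegative. Finally the pointwise identity $\int_{S_zM}\mathrm{Ric}(w,w)\,dw=\tfrac{\omega_{n-1}}{n}Sc(z)$ converts direction-averaged Ricci into scalar curvature.

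The crux is the passage from this averaged estimate to the stated integral of $Sc$ over $B(p,R-c)$. For $z$ with $d(p,z)<R-c$ the entire centered segment remains in $B(p,R-\tfrac c2)$, so $K(z,w)=\int_{-c/2}^{c/2}\cos^2(\tfrac{\pi t}{c})\,dt=\tfrac c2$ is full; discarding the remaining contributions (legitimate because $\mathrm{Ric}\ge0$ wherever $K>0$, as those directions sit over $B(p,R)$) lower-bounds the left-hand side by $\tfrac c2\cdot\tfrac{\omega_{n-1}}{n}\int_{B(p,R-c)}Sc$. Combining the two bounds and using $\mathrm{vol}\,B(p,R-\tfrac c2)\le\mathrm{vol}\,B(p,R)$ gives $\int_{B(p,R-c)}Sc\le n(n-1)(\pi/c)^2\,\mathrm{vol}\,B(p,R)$. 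I expect this boundary-layer bookkeeping to be the main obstacle: in the closed case the flow preserves all of $SM$ and the occupation time is constant, so the estimate is immediate, whereas here the non-invariance of $SM|_{B(p,R-c/2)}$ under the flow is precisely what loses the annulus $B(p,R)\setminus B(p,R-c)$ and forces the larger ball on the right.

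For the second item, $\mathrm{inj}(M)\le\mathrm{conj}(M)$ is standard, so it suffices to show $\mathrm{conj}(M)\le\pi$. Suppose not, and pick $\pi<c\le\mathrm{conj}(M)$. Since $\mathrm{Ric}\ge0$ globally, the first item applies on every ball $B(p,R)$, and $Sc\ge n(n-1)$ yields $n(n-1)\,\mathrm{vol}\,B(p,R-c)\le\int_{B(p,R-c)}Sc\le n(n-1)(\pi/c)^2\,\mathrm{vol}\,B(p,R)$, that is $\mathrm{vol}\,B(p,R-c)\le(\pi/c)^2\,\mathrm{vol}\,B(p,R)$. By the Bishop--Gromov inequality (valid under $\mathrm{Ric}\ge0$), $\mathrm{vol}\,B(p,R)\le\big(\tfrac{R}{R-c}\big)^n\mathrm{vol}\,B(p,R-c)$, so that $1\le(\pi/c)^2(R/(R-c))^n$. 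Letting $R\to\infty$ forces $1\le(\pi/c)^2$, contradicting $c>\pi$; hence $\mathrm{conj}(M)\le\pi$.
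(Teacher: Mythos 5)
Your proposal is correct and follows essentially the same route as the paper: the traced index form with the half-period trigonometric test field, averaging over the unit sphere bundle via the Liouville measure and geodesic-flow invariance, converting direction-averaged Ricci to scalar curvature, and then deducing $\mathrm{conj}(M)\le\pi$ from Bishop--Gromov applied to the resulting volume-ratio inequality. Your occupation-time bookkeeping for the boundary layer is in fact a more careful rendering of the inequality step the paper writes as $\int_{SB}\geq\int_{S B_{-l}}$.
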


The conjugate radius estimate indicates that positive scalar curvature does imply that a Riemannian manifold is curved or becomes thinner under the assumption of non-negative Ricci curvature and strictly positive scalar curvature. However, we still do not know whether Proposition \ref{completegreen} holds without any assumptions on non-negative Ricci curvature. On the one hand, if one can construct a complete, non-compact Riemannian manifold with positive scalar curvature and its injectivity radius is infinity, then it will deduce a negative answer to Conjecture \ref{gromovdim}; on the other hand, the local estimate (\ref{conjugateiscalarintegral}) indicates that the average of the integral of scalar curvature is bounded above in terms of the lower bound of the conjugate radius. Also, it implies that the volume growth and positive scalar curvature are intertwined locally. However, the interplay is still a mystery on a global scale. Many years ago, Yau proposed the following problem, which is involved with the volume growth and positivity of scalar curvature on a complete, non-compact Riemannian manifold with non-negative Ricci curvature.

\begin{problem}[Yau \cite{Yau}]\label{Yau}
Let $(M^n, g)$ be a complete, non-compact manifold with non-negative Ricci curvature and $B(p,r) \subset M$ a geodesic ball with center $p \in M$ and radius $r$. Do we have
\begin{equation}\label{yauscalar}
 \limsup_{r  \rightarrow \infty }r^{2-n}\int_{B(p,r)} Sc < \infty ?
\end{equation}
\end{problem}

In fact, Yau proposed a more general version of this problem that is involved with the $\sigma_k, k=1,2, \cdots, n$ of Ricci tensor in \cite{Yau}. Unfortunately, Yang \cite{yb} constructs a counterexample on K\"{a}hler manifold to prove that the general version of Yau's Problem \ref{Yau} does not hold for $k=1, 2, \cdots , n-1$, Xu \cite{Xu} obtains an estimate involved with the integral of scalar curvature towards the Problem \ref{Yau} in the case of three-dimensional Riemannian manifold by using the monotonicity formulas of Colding and Minicozzi \cite{cmmonotonicity}.  However, Problem \ref{Yau} remains open. In fact, it has been shown \cite{pet} that the inequality (\ref{yauscalar}) holds if we impose a strong curvature condition non-negative sectional curvature instead of non-negative Ricci curvature. Also, Naber \cite{naberscalar} asks the non-collapsing version of Yau's Problem \ref{Yau} that is a baby version, and propose a local version of Yau's Problem \ref{Yau}. Here,  we propose a baby version of Yau's Problem \ref{Yau} that is worthwhile of investigating as well.

\begin{problem}\label{pscvol}
Suppose that $(M^n,g)$ is a complete, non-compact Riemannian manifold with $Ric(g) \geq 0$ and $Sc(g) \geq 1$. Do we have
$$\limsup_{ r \rightarrow \infty }\frac{\mathrm{vol}(B(p,r))}{r^{n-2}} < \infty ?$$
\end{problem}

On the one hand, this problem can be regarded as a baby version of Yau's Problem \ref{Yau}: if Problem \ref{Yau} holds, then Problem \ref{pscvol} holds;  on the other hand, from the perspective of size geometry, this problem can provide more valid evidence for Gromov's Conjecture \ref{gromovdim}, and it can be considered as a quantitative version of \ref{gromovdim} in the category of Riemannian manifolds with non-negative Ricci curvature. 

Here, this study primarily focuses on the case of three-dimensional, complete, non-compact Riemannian manifolds. In fact, there are abundant of studies on three-dimensional Riemannian manifolds, including the case of compact or non-compact. For the closed three-dimensional Riemannian manifolds, the topological classification of three-dimensional Riemannian manifolds is clear according to Poinc\'{a}re Conjecture. Moreover,  the proof of Thurston’s Geometrization conjecture \cite{p1,p2,p3} shows that a closed three-dimensional Riemannian manifold admits a metric with positive scalar curvature if and only if it is a connected sum of spherical 3-manifolds
and some copies of $S^1\times S^2$. For complete non-compact three-dimensional Riemannian manifolds with non-negative Ricci curvature, Liu \cite{LG} proves that it is either diffeomorphic to $\mathbb{R}^3$ or its  universal cover splits. Therefore, there are not many three-dimensional Riemannian manifolds with the properties that they admit a complete Riemannian metric with non-negative Ricci curvature and positive scalar curvature. Topologically, they are either $\mathbb{R}^3$ or $\mathbb{S}^2 \times \mathbb{R}$. In a recent progress, Wang \cite{wj} shows that any complete, non-compact contractible three-dimensional Riemannian manifold with non-negative scalar curvature is homeomorphic to $\mathbb{R}^3$. M-W \cite{mw} obtain a comparison theorem of positive Green function and spectrum estimate on complete, non-compact, three-dimensional Riemannian manifold with some extra topological assumptions.

However, our goal in the paper is to study the geometry of a complete, non-compact, three-dimensional Riemannian manifold that positive scalar curvature has influence on, rather than the topology of a complete, non-compact, three-dimensional Riemannian manifold. According to the splitting theorem of complete, non-compact Riemannian manifold with non-negative Ricci curvature \cite{cg}, we primarily focus on the geometry of positive scalar curvature on $\mathbb{R}^3$.

\bigskip 

First, we have the following observation on Yau's Problem \ref{Yau}
\begin{theorem}\label{polecase}
Let $(M^3,g)$ be a complete, non-compact three-dimensional Riemannian manifold with a pole $p$ and $Ric(g) \geq 0$. Then
\begin{equation}
    \limsup_{r \rightarrow \infty } \frac{1}{r} \int_{B(p,r)}Sc \leq  20\pi. 
\end{equation}

\end{theorem}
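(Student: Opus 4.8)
The plan is to work in global geodesic polar coordinates. Since $p$ is a pole, $\exp_p$ is a diffeomorphism, so $M\setminus\{p\}\cong (0,\infty)\times S^2$ with $g=dr^2+g_r$, every geodesic sphere $\Sigma_r=\partial B(p,r)$ is diffeomorphic to $S^2$, and there are no conjugate points, i.e. the Jacobian $J(r,\theta)>0$ for all $r>0$. Write $H=\Delta r$ for the mean curvature of $\Sigma_r$, $S=\nabla^2 r|_{T\Sigma_r}$ for its shape operator (eigenvalues $\lambda_1,\lambda_2$), $K_{\Sigma_r}$ for its Gauss curvature, and $\mathcal A(r)=\mathrm{Area}(\Sigma_r)$, so that $dV=dr\,dA_r$. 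I would slice the integral $\int_{B(p,R)}Sc$ by the spheres $\Sigma_r$ and convert the ambient scalar curvature into intrinsic data of $\Sigma_r$ plus radial (Riccati) data.

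First I would record the pointwise contracted Gauss equation for the hypersurface $\Sigma_r\subset M^3$ with unit normal $\partial_r$, namely \[ 2K_{\Sigma_r}=Sc-2\,\mathrm{Ric}(\partial_r,\partial_r)+H^2-|S|^2, \] and integrate it over $\Sigma_r$. Gauss--Bonnet gives $\int_{\Sigma_r}K_{\Sigma_r}\,dA=2\pi\chi(S^2)=4\pi$, so \[ \int_{\Sigma_r}Sc\,dA=8\pi+2\int_{\Sigma_r}\mathrm{Ric}(\partial_r,\partial_r)\,dA-\int_{\Sigma_r}(H^2-|S|^2)\,dA. \] Next I would eliminate the radial Ricci term using the trace Riccati identity $\mathrm{Ric}(\partial_r,\partial_r)=-\partial_r H-|S|^2$ together with $\partial_r J=HJ$, which yield $\int_{\Sigma_r}\partial_r H\,dA=m'(r)-\int_{\Sigma_r}H^2\,dA$ for $m(r):=\int_{\Sigma_r}H\,dA=\mathcal A'(r)$. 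Substituting collapses the identity to $\int_{\Sigma_r}Sc\,dA=8\pi-2m'(r)+\int_{\Sigma_r}(H^2-|S|^2)\,dA$, and integrating over $r\in[0,R]$ (with $m(0^+)=0$) gives the master formula \[ \int_{B(p,R)}Sc=8\pi R-2m(R)+\int_0^R\!\!\int_{\Sigma_r}(H^2-|S|^2)\,dA\,dr. \]

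It remains to bound the sign-indefinite term $\int_{\Sigma_r}(H^2-|S|^2)\,dA=2\int_{\Sigma_r}\det S\,dA$ from above. Two inputs suffice. AM--GM gives $\det S=\lambda_1\lambda_2\le\tfrac14(\lambda_1+\lambda_2)^2=\tfrac14H^2$ pointwise. For the pinching $0\le H\le 2/r$: the upper bound is the Laplacian comparison under $\mathrm{Ric}\ge0$, while the lower bound $H\ge0$ is exactly where the pole enters. Indeed $|S|^2\ge\tfrac12H^2$ and $\mathrm{Ric}\ge0$ force $\partial_r H\le-\tfrac12H^2$ along each ray; if $H$ were ever negative, setting $f=-H$ gives $f'\ge\tfrac12 f^2$, which drives $H\to-\infty$ in finite radius, i.e. $J\to0$, producing a conjugate point and contradicting the pole. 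Hence $\det S\le H^2/4\le 1/r^2$, and by Bishop--Gromov $\mathcal A(r)\le 4\pi r^2$, so $\int_{\Sigma_r}(H^2-|S|^2)\,dA\le 2\mathcal A(r)/r^2\le 8\pi$. Since moreover $H\ge0$ implies $m(R)\ge0$, the master formula gives $\int_{B(p,R)}Sc\le 8\pi R+8\pi R=16\pi R$, hence $\limsup_{R\to\infty}\frac1R\int_{B(p,R)}Sc\le 16\pi$, which is within the stated bound $20\pi$.

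The main obstacle is controlling the extrinsic term $\int_{\Sigma_r}\det S\,dA$, which has no pointwise sign. The decisive step is the global lower bound $H\ge0$: it is not a comparison inequality but a consequence of the absence of conjugate points, and it is what lets the Laplacian comparison translate into the clean bound $\det S\le 1/r^2$. Once $H$ is pinched in $[0,2/r]$, AM--GM and Bishop--Gromov close the argument; the remaining slack between $16\pi$ and $20\pi$ is harmless and could be reintroduced by estimating the indefinite term more crudely.
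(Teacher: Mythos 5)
Your argument is correct, and it shares the paper's skeleton --- the contracted Gauss equation on geodesic spheres, Gauss--Bonnet contributing $8\pi$ per unit radius, and the coarea formula --- but the decisive estimates are genuinely different. The paper's Lemma \ref{bf} (the ``geometrically relative Bochner formula'') is exactly your integrated Riccati identity, so the two computations agree up to your master formula; after that they diverge. The paper retains the term $\int_{B(p,r)}\mathrm{Ric}(\nu,\nu)$ and bounds it by $8\pi r$ via the second variation of arc length along rays (Lemma \ref{ricciestimate}, using that every radial geodesic from a pole is minimizing), and separately bounds the boundary term $\int_{L^f_b}H\le 4\pi b$ by volume comparison; the three contributions $8\pi+8\pi+4\pi$ give $20\pi$. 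You instead eliminate $\mathrm{Ric}(\partial_r,\partial_r)$ pointwise through the traced Riccati equation, at the cost of having to control $\int_{\Sigma_r}2\det S\,dA$, and your key new input is the pinching $0\le H\le 2/r$: the upper bound is Laplacian comparison, while the lower bound is the Riccati blow-up (equivalently, concavity of $J^{1/2}$) argument showing $H<0$ would force a conjugate point, contradicting the pole. That lower bound also gives $m(R)\ge 0$, so you may discard the boundary term rather than estimate it, and AM--GM with $\mathrm{Area}(\Sigma_r)\le 4\pi r^2$ closes the argument. Your route is more self-contained (no index form, no stability of rays) and yields the sharper constant $16\pi$; the paper's route isolates the estimate $\frac1r\int_{B(p,r)}\mathrm{Ric}(\nu,\nu)\le 8\pi$ as a statement of independent interest. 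Both proofs use the pole essentially, but for different purposes: minimality of radial geodesics there, absence of conjugate points here.
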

This estimate confirms that Yau's Problem \ref{Yau} holds in the special case of complete, non-compact three-dimensional Riemannian manifolds with a pole. But, our assumption on the manifolds with pole is very artificial. For Yau's Problem \ref{Yau}, we will do more studies in our future works. 

\vskip 5mm

Moreover, we consider the baby version of Yau's Problem \ref{Yau}, then we obtain

\begin{theorem}\label{mainvolumegrowth}
Let $(M^3,g)$ be a complete, non-compact, three-dimensional Riemannian manifold with $Ric(g) \geq 0$ and $Sc(g) \geq 6$. Then, for any $p \in M$, we obtain
\begin{equation}
\limsup_{R \rightarrow \infty} \frac{\text{vol}(B(p,R))}{R} < \infty,
\end{equation}
if $\text{vol}(B(q,1)) \geq \epsilon >0$ for any $q \in M$.
\end{theorem}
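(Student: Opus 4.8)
The plan is to prove the stronger quantitative statement that $\int_{B(p,R)}Sc \le CR$, and then read off the volume bound from $Sc\ge 6$, since $6\,\mathrm{vol}(B(p,R)) \le \int_{B(p,R)}Sc$. This is the non-pole analogue of the computation underlying Theorem \ref{polecase}.

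First I would dispose of the global topology. By the Cheeger--Gromoll splitting theorem \cite{cg} together with Liu's structure theorem \cite{LG}, either $M$ is diffeomorphic to $\mathbb{R}^3$, or its universal cover splits isometrically as $N^2\times\mathbb{R}$. In the splitting case $Sc_N = Sc_M \ge 6$, so $N$ has Gauss curvature $\ge 3$ and is compact by Bonnet--Myers; then $N\times\mathbb{R}$, and hence $M$, has linear volume growth outright. So I may assume $M\cong\mathbb{R}^3$; in particular $M$ has one end, and for large $r$ the complement $\{\rho>r\}$ of the ball (where $\rho=d(p,\cdot)$) has a single unbounded component.

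For the analytic core, write $A(r)=\mathcal H^2(\Sigma_r)$ for the area of the geodesic sphere $\Sigma_r=\partial B(p,r)$, so that $\mathrm{vol}(B(p,R))=\int_0^R A(r)\,dr$ by the coarea formula. On the regular part of $\Sigma_r$ (a.e.\ $r$) the normal is $\nu=\nabla\rho$, the second fundamental form is $\mathrm{Hess}\,\rho$, and the mean curvature is $H=\Delta\rho$. Combining the Gauss equation with the radial Riccati equation $\partial_r H=-|\mathrm{Hess}\,\rho|^2-\mathrm{Ric}(\nu,\nu)$ and the three-dimensional identity relating $\mathrm{Ric}(\nu,\nu)$ to $Sc$ and the ambient sectional curvature of the tangent plane to $\Sigma_r$, the second variation of area takes the form
\[ A''(r)=2\pi\chi(\Sigma_r)-\tfrac12\int_{\Sigma_r}Sc+\int_{\Sigma_r}\det(\mathrm{Hess}\,\rho), \]
after using Gauss--Bonnet $\int_{\Sigma_r}K_{\Sigma_r}=2\pi\chi(\Sigma_r)$. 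Integrating once in $r$, using $A'(0)=0$ and coarea, gives the identity
\[ \tfrac12\int_{B(p,R)}Sc=2\pi\int_0^R\chi(\Sigma_r)\,dr+\int_{B(p,R)}\det(\mathrm{Hess}\,\rho)-A'(R). \]
The Laplacian comparison $\Delta\rho\le 2/\rho$ and Bishop--Gromov ($A(r)\le 4\pi r^2$) control the determinant term by $\int_{B(p,R)}\det(\mathrm{Hess}\,\rho)\le 4\pi R$, exactly as in Theorem \ref{polecase}, the cut-locus contributions entering with a favorable sign. The boundary term $-A'(R)$ is harmless: if $A$ is eventually non-increasing then it is bounded and the volume already grows linearly, and otherwise one evaluates along radii $R_j\to\infty$ with $A'(R_j)\ge 0$.

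Everything therefore reduces to the estimate $\int_0^R\chi(\Sigma_r)^{+}\,dr\le CR$, and this is the step I expect to be the main obstacle. Since handles only decrease $\chi$, it suffices to bound the integrated number of connected components $N(r)$ of $\Sigma_r$. In the pole case $\Sigma_r\cong\mathbb{S}^2$ and $\chi\equiv 2$, but in general $N(r)$ can jump upward when $\rho$ crosses an index-two critical point and a bounded ``pocket'' of $\{\rho>r\}$ pinches off. This is precisely where the non-collapsing hypothesis $\mathrm{vol}(B(q,1))\ge\epsilon$ must be used: each new component, or the pocket it bounds, should carry a definite amount of volume bounded below by $\epsilon$, while Bishop--Gromov caps the volume available inside $B(p,R)$; combined with the single-end reduction and the critical-point theory of distance functions, this should yield $\int_0^R N(r)\,dr\le C(\epsilon)R$. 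Making this packing argument quantitative is the delicate point, because geodesic spheres are only Lipschitz and their topology interacts with the cut locus. Granting the bound, substitution into the integral identity gives $\int_{B(p,R_j)}Sc\le C'R_j$ along the good radii, hence $\mathrm{vol}(B(p,R_j))\le \tfrac{C'}{6}R_j$, and monotonicity of $R\mapsto\mathrm{vol}(B(p,R))$ upgrades this to $\limsup_{R\to\infty}\mathrm{vol}(B(p,R))/R<\infty$.
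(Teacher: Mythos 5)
Your reduction to a single basepoint and the splitting/one-end dichotomy are fine, and the formal identity you derive for $\tfrac12\int_{B(p,R)}Sc$ is the correct three-dimensional bookkeeping (it is the same Schoen--Yau rewriting of $Sc$ that drives Theorem \ref{polecase}). But the argument has a genuine gap exactly where you flag it, and it is not a removable one: you need $\int_0^R\chi(\Sigma_r)^{+}\,dr\le CR$, and nothing in your sketch proves it. Away from a pole the level sets $\Sigma_r=\partial B(p,r)$ are not smooth surfaces for a.e.\ $r$ --- they are closed sets whose singularities are governed by the cut locus --- so $\chi(\Sigma_r)$ is not even defined without further work, Gauss--Bonnet is unavailable, and $A(r)$ need not be differentiable (only the one-sided comparison inequalities survive distributionally, which is enough for the $\det(\mathrm{Hess}\,\rho)$ term but not for the exact integrated identity you need). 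Even granting enough regularity, the packing heuristic ``each new component of $\Sigma_r$ costs volume at least $\epsilon$'' does not follow from $\mathrm{vol}(B(q,1))\ge\epsilon$: a bounded component of $\{\rho>r\}$ can have diameter much smaller than $1$, in which case it contains no unit ball and the non-collapsing hypothesis says nothing about its volume. Bounding the topology of metric spheres under a Ricci lower bound alone is a well-known difficulty, and your proposal hinges on it.

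The paper's proof avoids this entirely and is genuinely different: it argues by contradiction, using Lemma \ref{independentgeodesic} (this is where the non-collapsing hypothesis is actually used, to keep the Cheeger--Colding limit non-collapsed) to show that superlinear volume growth forces a pointed Gromov--Hausdorff limit splitting off $\mathbb{R}^2$; by McLeod--Topping \cite{MT} the three-dimensional limit is a topological manifold, hence $\mathbb{R}^3$ or $S^1\times\mathbb{R}^2$, and in either case one extracts an arbitrarily wide torical band $T^2\times I$, pulls it back to $M$ by the almost-isometries, and contradicts Gromov's band inequality (Theorem \ref{toricalband}) since $Sc\ge 6$. Your route would, if completed, yield the stronger integral estimate $\int_{B(p,R)}Sc\le CR$ toward Problem \ref{Yau} rather than just the volume bound, but as written its central step is conjectural, so it does not constitute a proof of Theorem \ref{mainvolumegrowth}.
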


Yau \cite{Yauharmonic} proved that any complete, non-compact manifolds
with non-negative Ricci curvature have at least linear volume growth. Combining Yau's result with our Theorem \ref{mainvolumegrowth}, we obtain that any complete, non-compact three-dimensional manifold
with non-negative Ricci curvature and strictly positive scalar curvature has linear volume growth or, namely, minimal volume growth.

\vskip 5mm

As a byproduct of the proof of Theorem \ref{mainvolumegrowth}, we obtain the following volume estimate in higher dimensional case,

\begin{corollary}\label{bnvolumegrowth}
Let $(M^n,g)$ be a complete, non-compact Riemannian manifold with $Ric(g) \geq 0$ and $Sc(g) \geq n(n-1)$. Then for any $p \in M$, we obtain
\begin{itemize}
\item If $vol(B(q,1)) \geq \epsilon >0$ for all $q \in M$, then
$$\limsup_{R \rightarrow \infty} \frac{\text{vol}(B(p,R))}{R^{n-1}} < \infty;$$

\item if $\mathrm{inj}(M) \geq \epsilon >0$, then
$$\limsup_{R \rightarrow \infty} \frac{\text{vol}(B(p,R))}{R^{n-2}} < \infty.$$

\end{itemize}

\end{corollary}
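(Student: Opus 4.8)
The plan is to deduce both volume bounds from \emph{Uryson width} estimates at the appropriate codimension, exactly as in the three-dimensional argument behind Theorem \ref{mainvolumegrowth}, and then to convert a width bound into a volume bound by a Bishop--Gromov covering. Write $k=n-1$ for the non-collapsed case and $k=n-2$ for the injectivity case. The goal in each case is to produce a scale $W=W(n)$, independent of $R$, together with a cover of $B(p,R)$ by at most $C(n)(R/W)^{k}$ sets of diameter $\le W$; the volume estimate then drops out.

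First I would record the conversion step, which is elementary. Since $Ric(g)\ge 0$, Bishop--Gromov gives the Euclidean upper bound $\mathrm{vol}(B(x,W))\le \omega_n W^{n}$ for every $x\in M$. Hence, given a cover $\{U_\alpha\}_{\alpha=1}^{N}$ of $B(p,R)$ with $\mathrm{diam}\,U_\alpha\le W$ and $N\le C(n)(R/W)^{k}$, each $U_\alpha$ sits in a metric ball of radius $W$ and
\[
\mathrm{vol}(B(p,R))\ \le\ \sum_{\alpha=1}^N \mathrm{vol}(U_\alpha)\ \le\ N\,\omega_n W^{n}\ \le\ C(n)\,\omega_n\,W^{\,n-k}\,R^{k},
\]
so that $\limsup_{R\to\infty} R^{-k}\mathrm{vol}(B(p,R))<\infty$ once $W$ is fixed independently of $R$. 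Equivalently, one may slice by $\rho=d(p,\cdot)$ and use the coarea formula, reducing the two bullets to the sphere-area bounds $\mathcal H^{n-1}(\partial B(p,t))\le C\,t^{\,k-1}$; a width bound for the geodesic sphere furnishes exactly these.

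The substance is therefore the width bound, and here the two hypotheses enter. The mechanism is that $Sc(g)\ge n(n-1)$ together with $Ric(g)\ge 0$ forces $B(p,R)$ to be thin in two directions at the fixed scale $W(n)$: this is the flavor of the conjugate-radius/focusing estimate of Proposition \ref{completegreen}, which under these curvature bounds yields $\mathrm{conj}(M)\le\pi$ and hence a uniform focusing scale. Propagating this thinness across the whole ball and building a controlled cover from a $W$-net in the two ``fat'' directions, the non-collapsing hypothesis $\mathrm{vol}(B(q,1))\ge\epsilon$ supplies the packing control that bounds the number of net points and delivers a uniform $(n-1)$-Uryson width bound, i.e.\ $k=n-1$. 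Upgrading to $k=n-2$ uses the stronger hypothesis $\mathrm{inj}(M)\ge\epsilon$: a uniform injectivity lower bound excludes the short geodesic loops and small-scale collapse that otherwise cost one dimension, so the sweepout can be run one codimension lower, exactly as the minimal/$\mu$-bubble sweepout does automatically when $n=3$.

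The main obstacle is precisely this uniform width bound in general dimension: one must show that the combination $Ric\ge 0$, $Sc\ge n(n-1)$ and the volume (resp.\ injectivity) lower bound makes $B(p,R)$ genuinely $(n-1)$- (resp.\ $(n-2)$-) dimensional at a scale independent of $R$. In dimension three the stable minimal-surface/sweepout techniques underlying Theorem \ref{mainvolumegrowth} already achieve the optimal codimension from the non-collapsing assumption alone; in higher dimensions the same scheme only yields $(n-1)$ thinness under non-collapsing, and extracting the extra dimension of thinness is exactly where the injectivity lower bound becomes indispensable. Once the width bound is in hand, the covering/coarea conversion above is routine.
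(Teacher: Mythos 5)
There is a genuine gap: your entire argument rests on a uniform Uryson width bound --- that $B(p,R)$ admits a cover by $C(n)(R/W)^{k}$ sets of diameter $\le W(n)$ with $k=n-1$ (resp.\ $n-2$) --- and you never establish it. The only ingredient you point to is the conjugate radius bound $\mathrm{conj}(M)\le\pi$ from Proposition \ref{completegreen}, but a conjugate radius bound is a statement about focal points along individual geodesics and does not imply any macroscopic thinness of the manifold; the passage from ``focusing at scale $\pi$'' to ``thin in two directions at a uniform scale, propagated across all of $B(p,R)$'' is exactly the hard content, and in the codimension-two case a uniform $(n-2)$-width bound under $Sc\ge n(n-1)$ is essentially Gromov's Conjecture \ref{gromovdim}, which the paper states is open even at codimension one. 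So the route you sketch would require resolving an open problem. (The conversion step is also not as routine as claimed: a width bound gives a map to a $k$-dimensional polyhedron with small fibers, but bounding the \emph{number} of pieces in the induced cover by $C(R/W)^k$ needs additional control on the polyhedron, e.g.\ on its branching and on the Lipschitz constant of the map.)

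The paper's proof avoids any width bound and instead argues by contradiction through Lemma \ref{independentgeodesic}: if $\mathrm{vol}(B(p_i,R_i))/R_i^{k}\to\infty$ with $k=n-1$ (resp.\ $k=n-2$), the pointed Gromov--Hausdorff limit of $(M,p_i)$ splits off $k+1$ Euclidean factors, so it is $\mathbb{R}^n$ in the non-collapsed case and $X^1\times\mathbb{R}^{n-1}$ in the injectivity case. The first limit is excluded by pulling back an almost-constant degree-one map $\mathbb{R}^n\to\mathbb{S}^n$ through the Gromov--Hausdorff approximations and contradicting the Spherical Lipschitz Bound Theorem \ref{slb}; the second is excluded by using Anderson--Cheeger $C^\alpha$ convergence (this is where $\mathrm{inj}(M)\ge\epsilon$ enters, to identify the limit as a smooth $\mathbb{S}^1\times\mathbb{R}^{n-1}$) and then contradicting the torical band inequality, Theorem \ref{toricalband}. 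Note also that your reading of the two hypotheses is off: the volume non-collapsing is used to make the Cheeger--Colding limit non-collapsed so that volume converges and the splitting argument applies, not to run a packing estimate, and the injectivity bound is used to upgrade the convergence to $C^\alpha$, not to remove short geodesic loops from a sweepout.
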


In fact, Anderson proved that \cite{anderson} any complete, non-compact Riemannian manifold $(M^n,g)$ with positive Ricci curvature has $b_1(M) \leq n-3$  and the rank of any free Abelian subgroup of $\pi_1(M)$ is at most $n-3$ for which the estimate is optimal. From Corollary \ref{bnvolumegrowth}, we also expect  that non-negative Ricci curvature and strictly positive scalar curvature on a complete, non-compact Riemannian manifold would imply $b_1(M) \leq n-3$ and it is optimal as well. For relevant result, you may refer to \cite{coldingtorus}. In fact, it will be interesting to study the dimension of harmonic functions with linear growth on a complete, non-compact manifold with non-negative Ricci curvature and strictly positive scalar curvature on higher dimensional Riemannian manifolds.

\begin{remark} In fact, Gromov \cite{gromovlarge} stated that, under the assumption that $K(g) \geq 0, Sc(g) \geq n(n-1)$ without any details, then 
\begin{equation}\label{volumesection}
    \sup_{p \in M}vol(B(p,r)) \leq c_n r^{n-2}.
\end{equation}
For the proof of $(\ref{volumesection})$, you may refer to \cite{pet}. Furthermore, Gromov \cite{gromovlarge} conjectures that the volume estimate $\ref{volumesection}$ holds if we only $Ric(g) \geq 0$. Here, our results can be regarded as a step to prove Gromov's conjecture under an extra condition volume non-collapse as $n=3$ or injectivity radius non-collapse as $n \geq 4$.
\end{remark}

Moreover, a complete, non-compact Riemannian manifold is said to be non-parabolic if it admits a positive Green function, otherwise it is said to be parabolic. By a result of Varopoulos \cite{var}, a complete, non-compact Riemannian manifold with $Ric(g)\geq 0$ is non-parabolic if and only if
$$\int_1^\infty \frac{r}{vol(B(p,r))} dr< \infty.$$
Hence, the following conclusion is deduced
\begin{corollary}
Let $(M^3,g)$ be a complete, non-compact three-dimensional Riemannian manifold with $Ric(g) \geq 0$, $Sc(g) \geq 6$ and $\text{vol}(B(p,1)) \geq \epsilon >0$ for all $p \in M$. Then $(M^3,g)$ is  parabolic.
\end{corollary}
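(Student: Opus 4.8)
The plan is to combine Theorem \ref{mainvolumegrowth} with the Varopoulos criterion recalled immediately above the statement. By Theorem \ref{mainvolumegrowth}, the three standing hypotheses $Ric(g) \geq 0$, $Sc(g) \geq 6$, and the volume non-collapse condition $\mathrm{vol}(B(q,1)) \geq \epsilon$ for all $q \in M$ are precisely what is needed to conclude
$$
\limsup_{R \rightarrow \infty} \frac{\mathrm{vol}(B(p,R))}{R} < \infty .
$$
First I would unwind the definition of $\limsup$: a finite value $L$ means that for any fixed $\delta > 0$ there is an $r_0 > 0$ so that $\mathrm{vol}(B(p,r))/r \leq L + \delta$ for every $r \geq r_0$. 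Setting $C := L + \delta$, this gives the clean pointwise upper bound $\mathrm{vol}(B(p,r)) \leq C\, r$ for all $r \geq r_0$.

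Next I would feed this linear upper bound into the Varopoulos integral. For $r \geq r_0$ one has $\frac{r}{\mathrm{vol}(B(p,r))} \geq \frac{r}{C r} = \frac{1}{C}$, and therefore
$$
\int_1^\infty \frac{r}{\mathrm{vol}(B(p,r))}\, dr \;\geq\; \int_{r_0}^\infty \frac{1}{C}\, dr \;=\; \infty .
$$
Since $(M^3,g)$ has $Ric(g) \geq 0$, Varopoulos's theorem applies, and the divergence of this integral is exactly the failure of the non-parabolicity criterion. Hence $(M^3,g)$ admits no positive Green function, i.e.\ it is parabolic.

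There is essentially no genuine obstacle here; the entire analytic content is already packaged inside Theorem \ref{mainvolumegrowth}, and the corollary is a short deduction. The only point requiring a word of care is that the $\limsup$ bound must be converted into a bound valid for \emph{all} sufficiently large $r$ rather than merely along a sequence, which is immediate from the definition of $\limsup$ (and is also consistent with monotonicity of the volume in $r$). I would emphasize in the write-up that it is the strict positivity of the scalar curvature that forces \emph{linear} rather than merely quadratic volume growth, and that it is exactly this linear rate $\mathrm{vol}(B(p,r)) = O(r)$ which makes the integrand $r/\mathrm{vol}(B(p,r))$ bounded below by a positive constant and hence makes the Varopoulos integral diverge.
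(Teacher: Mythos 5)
Your proof is correct and follows exactly the route the paper intends: Theorem \ref{mainvolumegrowth} gives at most linear volume growth, so the integrand $r/\mathrm{vol}(B(p,r))$ is bounded below by a positive constant for large $r$, the Varopoulos integral diverges, and the quoted criterion (applicable since $Ric(g)\geq 0$) yields parabolicity. One cosmetic quibble: your closing emphasis that \emph{linear} rather than quadratic growth is what makes the integral diverge is slightly off, since growth of order $r^2$ would already give an integrand of order $1/r$ and hence a divergent integral; this does not affect the validity of the argument.
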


According to the result in \cite{S}, $(M^3,g)$ admits no any nontrivial harmonic functions with polynomial growth unless it splits. In fact, we would like to believe that: on a complete, non-compact, $n$-dimensional Riemannian  manifold with non-negative Ricci curvature and strictly positive scalar curvature, then the dimension of harmonic function with linear growth should be less or equal to $n-2$ for any $n \geq 4$.

\bigskip

Finally, let's study the width of the manifold. Sormani \cite{sdiameter} proves for any complete non-compact manifold with non-negative Ricci curvature and minimal volume growth, Busemann function is proper. However, we can not expect that the diameter of  the level set of Busemann function has a uniformly bound there and many examples are illustrated in \cite{sdiameter}. Here, we prove an upper bound on the width of the manifold.

\begin{theorem}\label{width}
Let $(M^3,g)$ be a complete, non-compact three-dimensional Riemannian manifold with $Ric(g) \geq 0$, $Sc(g) \geq 2$ and $\text{vol}(B(p,1)) \geq \epsilon >0$. Then, there exists a constant $c$ and continuous function $f: M \rightarrow \mathbb{R}$ such that for any $r \in \mathbb{R}$, 
$$ \text{diam}(f^{-1}(r)) \leq c.$$
\end{theorem}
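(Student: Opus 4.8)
The plan is to produce $f$ as a regularization of a proper exhaustion whose level sets are separating surfaces of uniformly bounded diameter, extracting the diameter bound from the positivity of the scalar curvature. First I would record that the hypotheses force at most linear volume growth: rescaling the metric by the constant $\tfrac13$ turns $Sc(g)\geq 2$ into $Sc\geq 6$ while preserving a unit-scale lower volume bound (via Bishop--Gromov), so Theorem \ref{mainvolumegrowth} applies and gives $\mathrm{vol}(B(p,R))\leq CR$ for all large $R$, i.e. minimal volume growth. By Sormani's theorem \cite{sdiameter}, minimal volume growth makes every Busemann function $b$ associated to a ray proper, so its sublevel regions are compact and $b$ organizes $M$ into a one-parameter family of compact slices $b^{-1}(r)$; these are the first candidate for the level sets of $f$.

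Second, I would convert the coarse slices into controlled surfaces. By the coarea formula $\int_0^R \mathrm{Area}(\partial B(p,s))\,ds=\mathrm{vol}(B(p,R))\leq CR$, so a set of radii of full density satisfies $\mathrm{Area}(\partial B(p,r))\leq 2C$. Over the annular region between two such geodesic spheres I would minimize area (or, to keep the topology under control, run a $\mu$-bubble minimization with the Busemann function as a barrier), obtaining a two-sided, stable separating surface $\Sigma_r$ with $\mathrm{Area}(\Sigma_r)\leq 2C$, since the geodesic sphere is an admissible competitor. The family $\{\Sigma_r\}$ sweeps out $M$ and each $\Sigma_r$ separates the inner from the outer end.

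Third --- and this is where the scalar curvature does the work --- I would bound $\mathrm{diam}(\Sigma_r)$ uniformly. For a two-sided stable minimal surface in a $3$-manifold the stability inequality and the traced Gauss equation combine to
\[
\int_{\Sigma_r}\Big(\tfrac12|A|^2+\tfrac12 Sc_M\Big)\phi^2\;\leq\;\int_{\Sigma_r}|\nabla\phi|^2+\int_{\Sigma_r}K_{\Sigma_r}\phi^2 ,
\]
so that with $Sc_M\geq 2$ and Gauss--Bonnet the area is a priori bounded and $\Sigma_r$ is a sphere; feeding this into Schoen's curvature estimate for stable surfaces (so that $|A|$, and hence by Gauss the intrinsic curvature, is controlled) upgrades bounded area to a bounded intrinsic, and therefore extrinsic, diameter $\mathrm{diam}(\Sigma_r)\leq c_0$, in the spirit of Schoen--Yau and Gromov--Lawson. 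To pass from good radii to all level values I would invoke Gromov's band-width inequality \cite{gromovpsc}: the region trapped between consecutive good surfaces $\Sigma_r,\Sigma_{r'}$ carries $Sc\geq 2$ and is a band, hence has transverse width $\leq c_1$, so every point of $M$ lies within distance $c_1$ of some $\Sigma_r$. Defining $f$ by interpolating the labels of the $\Sigma_r$ (for instance via the signed distance to the sweepout) then yields a continuous function whose every level set lies in a $c_1$-neighborhood of a single $\Sigma_r$, giving $\mathrm{diam}(f^{-1}(t))\leq c_0+2c_1=:c$.

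I expect the main obstacle to be Step 3, specifically the passage from the analytic stability estimate to a genuine metric diameter bound: a priori an area-minimizing separating surface could break into several far-apart components or develop long thin fingers, and ruling this out requires using $Ric\geq 0$ together with the non-collapsing hypothesis $\mathrm{vol}(B(q,1))\geq\epsilon$ to guarantee uniform regularity (bounded $|A|$ and bounded local geometry) for the minimizer, and then arguing that bounded area plus bounded second fundamental form forbids large diameter. A secondary difficulty is organizing the discrete family $\{\Sigma_r\}$ into a globally continuous $f$ while keeping the bound uniform in $t$; here the band-width inequality is essential, since without transverse control the intermediate level sets could a priori spread out even if the distinguished slices do not.
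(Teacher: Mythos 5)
Your strategy (an area-minimizing sweepout whose slices have bounded diameter) is genuinely different from the paper's proof, which takes $f$ to be the Busemann function of a ray and bounds $\mathrm{diam}(f^{-1}(r))$ by contradiction, using the pointed Gromov--Hausdorff blow-down structure already established in the proof of Theorem \ref{mainvolumegrowth} (the limits are $X^2_\infty\times\mathbb{R}$ with $X^2_\infty$ compact, so the level sets are trapped in a bounded tube around the ray). Unfortunately your route has two concrete gaps. The first is an outright error: the band-width step fails for the bands you produce. Your own stability computation shows each $\Sigma_r$ is a sphere, so the region between consecutive slices is a band with \emph{spherical} cross-section, and Gromov's band-width inequality does not hold for such bands: $\mathbb{S}^2\times[0,L]$ with the round product metric satisfies $Ric\geq 0$ and $Sc\equiv 2$ while its width $L$ is arbitrary. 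The inequality quoted in the paper (Theorem \ref{toricalband}, from \cite{metricinequality}) is specifically for torical bands --- more generally for bands whose separating hypersurfaces carry no PSC metric --- which is exactly why the proof of Theorem \ref{mainvolumegrowth} works to exhibit a $T^2\times I$ inside $M$ before invoking it. Hence ``every point of $M$ lies within distance $c_1$ of some $\Sigma_r$'' does not follow, and the control of the intermediate level sets of your interpolated $f$ collapses.

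The second gap is the one you flag yourself, and it is not a technicality but the heart of the matter. Schoen's curvature estimate for stable minimal surfaces requires two-sided bounds on the ambient sectional curvature (it is proved by rescaling against a controlled background geometry), and no such bounds are available under $Ric\geq 0$, $Sc\geq 2$. Stability together with $\mathrm{Area}(\Sigma_r)\leq 4\pi$ does not by itself bound the intrinsic, let alone the extrinsic, diameter of a closed surface; the classical Schoen--Yau estimate in this direction is an \emph{inradius} bound for stable surfaces with boundary, which is vacuous for your closed slices. There are also unresolved existence and confinement issues for the minimizer in the annulus (geodesic spheres are not smooth barriers, the minimizer may touch the obstacle or split into components lying in far-apart parts of the annulus, and the annulus itself may have unbounded diameter), so even the family $\{\Sigma_r\}$ is not secured. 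By contrast, the paper's argument via Cheeger--Colding theory and the McLeod--Topping regularity of three-dimensional Ricci limit spaces bypasses minimal surfaces entirely; if you want to salvage a sweepout proof, you would need a diameter bound for stable spheres under $Sc\geq 2$ alone (e.g.\ via the warped $\mu$-bubble technique) rather than Schoen's estimate, and a replacement for the spherical band-width step.
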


\begin{remark}
Theorem \ref{width} indicates that any 3 dimensional manifold $(M^3,g)$ with non-negative Ricci curvature and positive scalar curvature will wander around a line. Hence, in the large scale, $M$ is a one dimensional line $\mathbb{R}$. In fact, by the proof of Theorem \ref{width}, we know $f$ can be a Lipschitz function such that $Lip(f) \leq 1$. Besides, it is possible that $f^{-1}(r)$ for some $r$ may not be connected, and there is no reason to expect that $f^{-1}(r)$ for all $t \in \mathbb{R}$ are connected. Hence, for the case of discounted level set,  we also count the distance among different connected components of the level set in the theorem.
$$diam(f^{-1}(r))= \sup_{x,y}\{d_g(x,y): f(x)=f(y)=r\}.$$
Here, $d_g$ is the distance induced by the Riemannian metric $g$ on $M$. Finally, by the language of Uryson width, Theorem \ref{width}  implies that $width_1(M) \leq c$ and $macrodim(M^3) =1$. 

\end{remark}

\vskip 5mm

The article is organized as follows. In section 2, we introduce some preliminary materials for the proofs of main theorems and then prove Proposition \ref{completegreen}. In section 3, we prove Theorem \ref{polecase}, \ref{mainvolumegrowth}, Corollary \ref{bnvolumegrowth} and Theorem \ref{width}. Now, let's briefly outline the proof of main results in this work. For the proof of Theorem \ref{polecase}: first, by using the stability of the geodesic ray, we deduce an upper bound on the integral of Ricci curvature in the direction of the normal vector field over the geodesic ball centered at the pole. Then, we make use of the geometrically relative Bochner formula and Gauss Bonnet formula on a geodesic sphere to obtain an upper bound on the integral of scalar curvature. In fact, the geometrically relative Bochner formula plays a vital role in the proof to get rid of the second fundamental form of the geodesic sphere and Theorem \ref{polecase} will be much stronger than Proposition \ref{completegreen}. For the proof of Theorem \ref{mainvolumegrowth}: we first prove Lemma \ref{independentgeodesic} which relates volume growth to the Euclidean split of Ricci limiting space. Then, we argue our theorem by contradiction. By combining Lemma \ref{independentgeodesic} with Cheeger-Colding theory and the result of M-T on the characterization of three-dimensional Ricci limiting space, we obtain that, topologically, the Ricci limiting space can only be: $\mathbb{R}^3, \ \mathbb{R}^2 \times S^1$, then, we rule out both cases by using the torus band estimate. Hence, we prove a minimal volume growth under strictly positive scalar curvature. Here, we avoid defining a generalized scalar curvature on the Ricci limiting space, which is a hard question for the author. Instead, we use the Lipschitz structure of positive scalar curvature proved by G-WXY to rule out $2$ cases of Ricci limiting space. Essentially, our argument works whenever the Ricci limiting space is a topological manifold and hence we have a corresponding result in the higher dimensional Riemannian manifolds with stronger condition assumption. In fact, the method of using Lipschitz structure of strictly positive scalar curvature is quite new to study the positivity of scalar curvature on complete Riemannian manifolds with non-negative Ricci curvature. 

\bigskip 

Acknowledgement: The author would like to express his gratitude to his advisor Prof. Jiaping Wang for his constant support. The author is also grateful to Prof. Ruobing Zhang, Prof. Wenshuai Jiang for their help on metric geometry.  The author is indebted to Prof. Hanlong Fang and Pak-Yeung Chan, Zhichao Wang for their discussions over the past years. 
\bigskip


\bigskip

\section{Preliminaries and Notations}

In this section, let's make some preparations and prove some lemmas for the proof of the main theorems in the paper.  We will start by the stability of the geodesics in a Riemannian manifold.

\subsection{Variation of Geodesic}

Suppose that $(M^n,g)$ is a complete Riemannian manifold and $\gamma: [a,b] \rightarrow M$ is a smooth curve in $M$ with $\|\gamma^{\prime}(t)\| =1$ and $\gamma(a)=p, \gamma(b)=q, \ p,q \in M$. Then, we consider a smooth variation of  $\gamma(t)$:
$$\gamma(t,s): [a,b] \times [-\epsilon, \epsilon] \rightarrow M. $$
and $\gamma(t,0)=\gamma(t)$ and $\gamma(a,s)=p$ and $\gamma(b,s)=q$. We say that $\gamma$ is a geodesic if $\gamma$ is a critical point of the length functional $$L(s)= \int_a^b \|\frac{\partial \gamma(t,s) }{\partial t }\|dt.$$ That is for any variation vector field $X(t)$ and $X(t)= \frac{\partial \gamma(t,s)}{\partial s}|_{s=0} $ with $X(a)=X(b)=0$, we have

\begin{equation}
   0 = L^\prime(0) = \int_a^b \langle \gamma^{\prime\prime}(t), X(t) \rangle dt.
\end{equation}
It is equivalent to saying that $\gamma^{\prime\prime}(t)=0$ on $[a,b]$.

Moreover, we calculate the second variation of the arc length functional on geodesic $\gamma$, then

\begin{equation}\label{stable}
    L^{\prime\prime}(0)= \int_a^b \|\nabla X\|^2 -  \langle R(X, \gamma^\prime(t))\gamma^\prime(t), X\rangle dt.
\end{equation}
Here, $\langle R(X, \gamma^\prime(t))\gamma^\prime(t), X\rangle$ is the Riemannian curvature and $\nabla$ is the Levi-Civita connection on $(M,g)$. A geodesic is said to be stable if the second variation is non-negative. i.e.,  $L^{\prime\prime}(0)\geq 0$. Since the calculations above are classical and standard on any Riemannian geometry textbook, we omitted the details (See \cite{ligeometryanalysis}).

Then, for any fixed point $x \in M$,  we introduce the exponential map as introduce
$$exp:  T_xM \rightarrow M, exp(v) = \gamma(1), v \in T_xM.$$

\begin{definition} Let $(M^n,g)$ be a complete Riemannian manifold. Then we define the injectivity radius of $M$ as follows
$$
\mathrm{Inj}(M):= \inf_{x \in M} \sup_{r}\{r: exp: B(x,r)\rightarrow exp(B(x,r)) \ \text{is a diffeomorphism}\}.$$
and the conjugate radius of $M$ as follows
$$\mathrm{conj}(M) = \inf_{x} \sup_r \{r: exp: B(x,r) \rightarrow exp(B(x,r)), \ B(x,r) \ \text{contains no critical points of exp}.\}$$
Finally,
$(M^n,g)$ is said to be a manifold with a pole at $p$ if $exp_p: T_p M \rightarrow M$ is a diffeomorphism.
\end{definition}
By the definition of injectivity radius and conjugate radius of $(M,g)$, we have
$\mathrm{inj}(M) \leq \mathrm{conj}(M)$. Moreover, let $(M^n,g)$ be a complete Riemannian manifold and $\gamma_{x,v}(t)$  the unique geodesic with initial conditions 
$$\left\{
\begin{array}{l}
     \gamma_{(x,v)}(0)=x;\\
     \gamma_{(x,v)}^\prime(0)=v, v \in T_xM.
\end{array}
\right.$$
The initial problem is solvable uniquely by the classical ODE problem and hence $\gamma$ always exists.
\begin{definition}
For a given $t\in \mathbb{R}$, we define a diffeomorphism of the tangent bundle $TM$
$$\varphi_t: TM \rightarrow TM.$$
as follows
$$\varphi_t(x,v)= (\gamma_{(x,v)}(t), \gamma_{(x,v)}^\prime(t) ).$$
\end{definition}
In fact, the family of diffeomorphism $\varphi_t$ is a flow. i.e., it satisfies with $\varphi_{t+s}=\varphi_t\circ \varphi_s$ for any $t,s \in \mathbb{R}$ since the uniqueness of the geodesic with respect to the initial conditions. Besides, let $SM$ be the unit tangent bundle of $M$, that is, $$SM=\{(x,v): x \in M, v \in T_xM, \|v\|=1\}.$$ Since geodesics travel with constant speed, we have that $\varphi_t$ leaves $SM$ invariant. Given $(x,v) \in SM$, we obtain that $\varphi_t(x,v) \in SM$ for all $t\in \mathbb{R}$. It is well known that any closed, compact Riemannian manifold admits a complete geodesic flow. Finally, you may refer to the textbook \cite{A} for the details about the geodesic flow and the following results.

\begin{definition}
Let $(M^n,g)$ be a complete Riemannian manifold. We define the Liouville measure $L$ on $SM$. The measure $L$ is given locally by the product of the Riemannian volume on $M$ and the Lebesgue measure on the unit sphere. That is, for any subset $A=(U,A_x)\subset SM$, where $U\subset M$ is a subset of $M$, $A_x$ is a subset of the unit sphere of the tangent space at $x \in U$, $L$ is defined by
$$L(A)=\int_{x \in U} \int_{A_x} d\mathbb{S}^{n-1} dvol(x).$$
where $d \mathbb{S}^{n-1}$ is the usual Lebesgue measure on the unit sphere.
\end{definition}

A well known result related to the Liouville theorem is,
\begin{lemma}
Let $(M^n,g)$ be a complete manifold and $\varphi_t$ the geodesic flow. Then for any Borel set $B$ in $SM$ and $t\in \mathbb{R}$, we have,
$$L(\varphi_t(B)) = L(B).$$
That is, geodesic flow preserves Liouville measure.
\end{lemma}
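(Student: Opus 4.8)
The plan is to recognize the geodesic flow as a Hamiltonian flow, invoke the symplectic version of Liouville's theorem to preserve phase volume, and then match the measure induced on the energy hypersurface with the explicitly defined product measure $L$. First I would transport the problem from $SM$ to the unit cotangent bundle $S^*M$ via the musical isomorphism $v \mapsto g(v,\cdot)$ determined by the metric. This diffeomorphism intertwines the geodesic flow $\varphi_t$ with the Hamiltonian flow of $H(x,p) = \tfrac{1}{2}|p|_g^2$ on $T^*M$, and it carries $L$ to the analogous product measure on $S^*M$, so it suffices to establish invariance there.

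Next, on $T^*M$ equipped with the tautological $1$-form $\lambda$ and the canonical symplectic form $\omega = d\lambda$, the Hamiltonian vector field $X_H$ satisfies $\iota_{X_H}\omega = dH$. By Cartan's formula, $\mathcal{L}_{X_H}\omega = d\,\iota_{X_H}\omega + \iota_{X_H} d\omega = d(dH) + 0 = 0$, so the phase volume $\omega^n/n!$ is preserved by the flow; this is the classical Liouville theorem. Since $H$ is constant along its own flow, the flow preserves the regular level set $S^*M = \{H = \tfrac12\}$.

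I would then produce the induced invariant measure on this hypersurface. Choosing, on a neighborhood of $S^*M$, a $(2n-1)$-form $\mu$ with $\omega^n/n! = dH \wedge \mu$, the flow-invariance of both $\omega^n$ and $H$ forces $\varphi_t^*\mu = \mu$ for the restricted density. Equivalently, and more in the spirit of the variational computations used earlier, one may describe $d\varphi_t$ through Jacobi fields: under the horizontal--vertical splitting $T_{(x,v)}SM \cong T_xM \oplus T_xM$, the differential $d\varphi_t$ sends $(\xi,\eta)$ to $(J(t),J'(t))$, where $J$ is the Jacobi field along $\gamma_{(x,v)}$ with $J(0)=\xi$ and $J'(0)=\eta$; the bilinear form $\langle\xi_1,\eta_2\rangle - \langle\eta_1,\xi_2\rangle$ is the Wronskian of Jacobi fields and is constant in $t$, so its appropriate exterior power, the Liouville volume, is preserved.

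Finally, I would identify $\mu|_{S^*M}$ with the measure $L$ of the definition. Working in an orthonormal coframe, one checks that the symplectic quotient density factorizes as the Riemannian volume on the base times the round Lebesgue measure on the unit cosphere fiber, reproducing $L(A) = \int_U \int_{A_x} d\mathbb{S}^{n-1}\, dvol(x)$. Combining this identification with $\varphi_t^*\mu = \mu$ gives $L(\varphi_t(B)) = L(B)$ for every Borel set $B$. The conceptual steps, namely Hamiltonian invariance of phase volume and passage to the level set, are standard; I expect the main bookkeeping obstacle to be the last step, verifying carefully in coordinates, with correct treatment of the transverse direction $dH$, that the intrinsic symplectic measure coincides with the explicitly defined product measure $L$.
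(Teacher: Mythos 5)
Your argument is correct, but there is nothing in the paper to compare it against: the author states this lemma as a ``well known result'' and simply refers to Arnold's textbook for the proof. What you have written is precisely the standard argument that such a reference contains, namely transporting to $S^*M$, Liouville's theorem $\mathcal{L}_{X_H}\omega=0$ for the Hamiltonian $H=\tfrac12|p|_g^2$, restriction of the invariant phase volume to the regular level $\{H=\tfrac12\}$, and identification of the quotient density with the product measure. The one step you flag as ``bookkeeping'' does work out cleanly: in metric-orthonormal fibre coordinates the phase volume is $dvol_g\wedge dp$, polar coordinates give $dp=r^{n-1}\,dr\,d\mathbb{S}^{n-1}$, and since $dH=r\,dr$ one gets $dp=r^{n-2}\,dH\,d\mathbb{S}^{n-1}$, whose restriction at $r=1$ is exactly the round measure, so $\mu|_{S^*M}$ coincides with $L$ with no extra normalizing factor. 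The only hypothesis you use implicitly is that $\varphi_t$ is defined for all $t\in\mathbb{R}$, which follows from completeness via Hopf--Rinow and is worth one sentence. Your alternative description of $d\varphi_t$ via Jacobi fields and the constancy of the Wronskian is a correct reformulation of the same symplectic invariance and could be omitted without loss.
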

Finally, we need the following basic integral form of the scalar curvature $Sc(g)$ in terms of the Ricci curvature $Ric(g)$.

\begin{lemma} Let $(M^n,g)$ be a complete Riemannian manifold. Then for any $p 
\in M$, we obtain,
\begin{equation}
    Sc_p=\frac{n}{\mathrm{vol}(\mathbb{S}^{n-1})}\int_{v\in \mathbb{S}^{n-1}} Ric_p(v) d v.
\end{equation}
\begin{proof}
Let  $\{e_i\}_{i=1}^{n} \in T_pM$ be an orthonormal coordinate such that $Ric_p(e_i)=\lambda_i e_i$. Then, 
for any $v\in \mathbb{S}^{n-1}$, $$v=x_ie_i.$$
Hence
$$\sum_{i=1}^n x_i^2=1, \ Sc_p= \sum_{i=1}^{n}\lambda_i,$$
and then
$$\int_{v \in \mathbb{S}^{n-1}} Ric_p(v)d v= \int_{x \in \mathbb{S}^{n-1}} \sum_{i=1}^nx_i^2\lambda^2_i dx = \sum_{i=1}^n\lambda^2_i \int_{x \in \mathbb{S}^{n-1}} x_i^2 dx = \frac{ \mathrm{vol}(\mathbb{S}^{n-1}) Sc_p}{n}.$$

\end{proof}
\end{lemma}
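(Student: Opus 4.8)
The plan is to reduce this pointwise identity to elementary linear algebra in the single tangent space $T_pM$, since both sides depend only on the Ricci tensor at $p$ and no analysis or comparison geometry is involved. The scalar curvature is by definition the trace of the Ricci tensor, and the right-hand side is just the average of the directional Ricci curvature over all unit directions, so the whole statement is a spherical-averaging computation.

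First I would diagonalize the Ricci tensor at $p$: choose an orthonormal basis $\{e_i\}_{i=1}^n$ of $T_pM$ consisting of eigenvectors of $Ric_p$, so that $Ric_p(e_i,e_j)=\lambda_i\delta_{ij}$. With this choice $Sc_p=\tr(Ric_p)=\sum_{i=1}^n \lambda_i$. Next, for a unit vector $v=\sum_i x_i e_i\in\mathbb{S}^{n-1}$, I would expand
\begin{equation}
Ric_p(v)=Ric_p(v,v)=\sum_{i,j}x_i x_j\, Ric_p(e_i,e_j)=\sum_{i=1}^n \lambda_i x_i^2,
\end{equation}
the cross terms vanishing because the basis diagonalizes $Ric_p$.

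The key remaining step is to integrate this over the unit sphere and interchange the finite sum with the integral, giving $\int_{\mathbb{S}^{n-1}}Ric_p(v)\,dv=\sum_i \lambda_i\int_{\mathbb{S}^{n-1}}x_i^2\,dx$. Here I would invoke the rotational symmetry of the round sphere: the coordinate functions $x_i$ are all equivalent under the $O(n)$-action, so the integrals $\int_{\mathbb{S}^{n-1}}x_i^2\,dx$ are independent of $i$; summing over $i$ and using the constraint $\sum_i x_i^2=1$ on $\mathbb{S}^{n-1}$ yields $n\int_{\mathbb{S}^{n-1}}x_1^2\,dx=\mathrm{vol}(\mathbb{S}^{n-1})$, hence $\int_{\mathbb{S}^{n-1}}x_i^2\,dx=\tfrac{1}{n}\mathrm{vol}(\mathbb{S}^{n-1})$ for every $i$. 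Substituting back gives $\int_{\mathbb{S}^{n-1}}Ric_p(v)\,dv=\tfrac{1}{n}\mathrm{vol}(\mathbb{S}^{n-1})\sum_i\lambda_i=\tfrac{1}{n}\mathrm{vol}(\mathbb{S}^{n-1})\,Sc_p$, which rearranges to the claimed formula.

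There is no genuine obstacle in this argument; it is a short averaging identity. The only point deserving care is the symmetry evaluation $\int_{\mathbb{S}^{n-1}}x_i^2\,dx=\tfrac{1}{n}\mathrm{vol}(\mathbb{S}^{n-1})$, and the observation that although the computation is carried out in a particular eigenbasis, the final statement is basis-independent since it equates two coordinate-free quantities. I would also remark that the same proof shows, more generally, that the spherical average of any quadratic form equals $\tfrac{1}{n}$ times its trace.
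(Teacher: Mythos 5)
Your proof is correct and follows essentially the same route as the paper: diagonalize $Ric_p$ in an orthonormal eigenbasis, expand $Ric_p(v)=\sum_i\lambda_i x_i^2$, and use the symmetry identity $\int_{\mathbb{S}^{n-1}}x_i^2\,dx=\tfrac{1}{n}\mathrm{vol}(\mathbb{S}^{n-1})$. In fact your version is cleaner, since the paper's displayed computation contains a typo ($\lambda_i^2$ where $\lambda_i$ is meant) that you implicitly correct.
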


\bigskip

\subsection{Integral of Curvatures}

\begin{definition}
Let $(M^n,g)$ be a complete, non-compact manifold, $\gamma(t)=\exp(tv), t \geq 0, v \in T_p M$ is called a ray if it is minimal on every interval
$$d(\gamma(t), \gamma(s))=|s-t|,\  s,t >0,$$
and the unit vector $v$  is called a direction of $\gamma(t)$. Assume that $\gamma_i(t)=\exp(tv_i), v_i \in T_pM$ are rays, $\{\gamma_i(t)\}$ are independent and orthogonal if their directions $\{v_i\}$ are linearly independent and mutually orthogonal at $p$. Moreover, we define the Busemann function $B_{\gamma}(x)$ associated with any ray $\gamma(t)$
$$
    B_{\gamma}(x) = \lim_{t \rightarrow \infty}(t - d(x, \gamma(t))).
$$
\end{definition}
Here, $B_{\gamma}(x)$ is well-defined since $f(t)= t-d(x,\gamma(t))$ is increasing in terms of $t$ and uniformly bounded from above.
\vskip 2mm

\begin{lemma}\label{ricciestimate}
Let $(M^n,g)$ be a complete, non-compact manifold with a pole $p$ and $Ric(g) \geq 0$. Then, for any ray $\gamma(t)$ with $\gamma(0)=p$,
\begin{equation}
    \frac{1}{r}\int_0^r t^2 Ric(\gamma^\prime(t), \gamma^\prime(t))dt \leq n-1.
\end{equation}
In particular, as $n=3$, 
\begin{equation}
    \frac{1}{r}\int_{B(p,r)} Ric(\nu,\nu) (x)d\mathcal{H}^3(x)\leq 8\pi.
\end{equation}
where $\nu$ is the outer unit normal vector field of the geodesic sphere.
\begin{proof}
Assume that $\gamma(t)$ with $\gamma(0)=p$ is a ray and $V(t)$ is a smooth vector field along $\gamma(t)$, we consider the variation of $\gamma(t)$:
$$\gamma(t,s)= \exp_{\gamma(t)}(sV(t)), s \in \mathbb{R}.$$
Since $\gamma(t)$ is a minimizing geodesic, then, on any interval $[0,b]$, we obtain that the second variation of length functional is non-negative by \ref{stable}. i.e.,
$$\int_a^b |\nabla V(t)|^2 - K(\gamma^\prime(t), V(t))dt \geq 0.$$
Hence,
$$\int_a^b K(\gamma^\prime(t), V(t)) dt \leq \int_a^b |\nabla V|^2 dt.$$

Then, for any $t \in [0,b]$, we assume that $\{e_i(t)\}_{i=1}^{n-1}$ is the parallel vector field such that $\{e_i(t), \gamma(t)\}$ forms an orthonormal base in $T_{\gamma(t)}M$. Now we fix any $x \in (0,b)$ and then  take 
$$V_i(t) = \left\{\begin{array}{ll}
   \frac{t}{x}e_i(t),  & t \in [0,x]; \\
   
    \frac{b-t}{b-x}e_i(t), & t \in [x,b]. 
\end{array}
\right.$$
Then, we plug $V_i(t)$ into the inequality to obtain
\begin{equation*}
    \frac{1}{x^2} \int_0^x t^2 Ric (\gamma^\prime(t),\gamma^\prime(t) )dt + \frac{1}{(b-x)^2}\int_0^{b-x} t^2Ric(\gamma^{\prime}(b-t), \gamma^{\prime}(b-t))dt \leq (n-1)(\frac{1}{x}+ \frac{1}{b-x}).
\end{equation*}
By taking  $b \rightarrow \infty$ and the assumption that $Ric(g) \geq 0$, we obtain
$$\frac{1}{x^2} \int_0^x t^2 Ric (\gamma^\prime(t),\gamma^\prime(t) )dt \leq \frac{n-1}{x}.$$
Hence, for any $r >0$
\begin{equation} \label{riccionray}
    \frac{1}{r} \int_0^r t^2 Ric (\gamma^\prime(t),\gamma^\prime(t) )dt \leq n-1.
\end{equation}

Since $p \in M $ is a pole, we have for any $t >0$, the geodesic sphere $\partial B(p,t) \subset M$ is a sphere and on which the unit normal vector field is well-defined by $\nu$. As a result, we obtain $f(x)= Ric(\nu,\nu)(x)$ is a well-defined, induced function on each point $x \in M$. Finally, for any $v \in T_p M$, $\exp(tv)$ is a ray and hence we can obtain the estimate (\ref{riccionray}) on it. Then, by the volume comparison theorem $Area(\partial B(p,t)) \leq 4\pi t^2$ and coarea formula along the distance function, we obtain
$$\frac{1}{r}\int_{B(p,r)} Ric(\nu,\nu)d\mathcal{H}^3 \leq 4\pi (n-1).$$
Here, $\mathcal{H}^3$ is the 3 dimensional Hausdorff measure on $M^3$.
\end{proof}
\end{lemma}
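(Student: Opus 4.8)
The plan is to derive the per-ray inequality from the stability of a ray via the second variation formula (\ref{stable}), and then promote it to the ball integral in the $n=3$ case using geodesic polar coordinates and Bishop's Jacobian comparison. Since a ray is minimizing on every subinterval $[0,b]$, it is in particular a stable geodesic there, so $L''(0)\geq 0$ for all variations of $\gamma|_{[0,b]}$ vanishing at the endpoints. First I would fix $b>0$ and an interior point $x\in(0,b)$, choose a parallel orthonormal frame $\{e_i(t)\}_{i=1}^{n-1}$ along $\gamma$ orthogonal to $\gamma'$, and test stability against the tent-shaped fields $V_i(t)=(t/x)\,e_i(t)$ on $[0,x]$ and $V_i(t)=((b-t)/(b-x))\,e_i(t)$ on $[x,b]$. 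Because $e_i$ is parallel, $\|\nabla V_i\|^2$ is piecewise constant ($1/x^2$ on $[0,x]$ and $1/(b-x)^2$ on $[x,b]$), while the curvature term is $t^2/x^2$ (resp. $(b-t)^2/(b-x)^2$) times $\langle R(e_i,\gamma')\gamma',e_i\rangle$. Summing the $n-1$ stability inequalities and using $\sum_{i=1}^{n-1}\langle R(e_i,\gamma')\gamma',e_i\rangle=Ric(\gamma',\gamma')$ collapses the frame into the Ricci curvature.

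This produces the estimate
$$\frac{1}{x^2}\int_0^x t^2 Ric(\gamma',\gamma')\,dt + \frac{1}{(b-x)^2}\int_x^b (b-t)^2 Ric(\gamma',\gamma')\,dt \leq (n-1)\Big(\frac{1}{x}+\frac{1}{b-x}\Big).$$
Here the hypothesis $Ric(g)\geq 0$ does the decisive work: the second integral on the left is non-negative, so I may discard it and let $b\to\infty$, which sends $1/(b-x)\to 0$ and leaves $\frac{1}{x}\int_0^x t^2 Ric(\gamma',\gamma')\,dt \leq n-1$. Renaming $x$ to $r$ gives exactly the first claimed inequality.

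For the $n=3$ integral bound, the pole hypothesis enters twice. It makes $\exp_p$ a global diffeomorphism, so every radial geodesic $\gamma_v(t)=\exp_p(tv)$ is a ray and the inequality above applies in all directions $v\in\mathbb{S}^2$; and it makes each geodesic sphere $\partial B(p,t)$ embedded with a well-defined outward normal $\nu(\exp_p(tv))=\gamma_v'(t)$, so that $Ric(\nu,\nu)$ at $\exp_p(tv)$ is precisely $Ric(\gamma_v'(t),\gamma_v'(t))$. I would then write the ball integral in polar coordinates as $\int_{B(p,r)} Ric(\nu,\nu)\,d\mathcal{H}^3=\int_0^r\!\int_{\mathbb{S}^2} Ric(\gamma_v'(t),\gamma_v'(t))\,A(t,v)\,dv\,dt$, where $A(t,v)$ is the area Jacobian of $\exp_p$. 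Applying the pointwise Bishop comparison $A(t,v)\leq t^{2}$ (valid under $Ric\geq 0$, with Euclidean model $t^{n-1}$), interchanging the order of integration, and inserting the per-ray bound $\int_0^r t^2 Ric(\gamma_v',\gamma_v')\,dt \leq 2r$ yields $\int_{B(p,r)} Ric(\nu,\nu)\,d\mathcal{H}^3 \leq 2r\cdot\mathrm{vol}(\mathbb{S}^2)=8\pi r$, which is the stated conclusion.

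I expect the only delicate point to be this transition from the single-ray estimate to the ball integral. Since $Ric(\gamma_v',\gamma_v')$ genuinely depends on the direction $v$, it cannot be pulled outside the $v$-integral; consequently one must use the \emph{pointwise} Jacobian comparison $A(t,v)\le t^{n-1}$ to replace the true area element by $t^2$ before applying the ray bound direction-by-direction, rather than only the integrated area bound $\mathrm{Area}(\partial B(p,t))\le 4\pi t^2$. Everything else reduces to the standard second-variation computation, which is routine once the tent fields have been chosen.
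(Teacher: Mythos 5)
Your proposal is correct and follows essentially the same route as the paper: the same tent-shaped test fields in the second variation along $[0,b]$, summation over a parallel frame to produce $Ric(\gamma',\gamma')$, discarding the nonnegative second integral and letting $b\to\infty$, then integrating over directions at the pole. Your final step is in fact slightly more careful than the paper's, which invokes only the integrated bound $\mathrm{Area}(\partial B(p,t))\leq 4\pi t^2$ with the coarea formula; as you observe, since $Ric(\gamma_v'(t),\gamma_v'(t))$ depends on $v$, one really needs the pointwise Bishop Jacobian comparison $A(t,v)\leq t^{2}$ before applying the per-ray estimate direction by direction, which is what the paper's argument implicitly requires.
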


\bigskip

Then, let's introduce the following type of geometrically relative Bochner formula.
Let $f$ be a smooth function defined on a complete Riemannian manifold $(M^n,g)$, we define the level set of $f$ as

$$L_t^f =\{x \in M: f(x)=t\}.$$

On each level set $L_a^f$, if  it is a smooth $n-1$ dimension embedded submanifold in $M$, we define the second fundamental form and mean curvature of $L_t^f$ by $A$ and $H$ respectively with respect to the outer normal vector field $u = \frac{\nabla f}{|\nabla f|}$. Hence,
$$A=\nabla_{L^f_t}(u),\ H=\text{Div}(u).$$
Here, $\nabla_{L^f_t}$ is the restriction of $\nabla $ on $L^f_t$. Then, we introduce $G= H^2 - |A|^2.$

Then, we have the following geometrically relative Bochner formula in \cite{pet}. Here, we give a different proof using Bochner formula of the vector field. 

\begin{lemma}\label{bf}
Suppose that $(M^n,g)$ is a complete Riemannian manifold and there exists no critical point in $[a,b]$ for $f$. Then,
\begin{equation}
    \int_{f^{-1}[a,b]} Ric(u, u) - G = \int_{L^f_a } H - \int_{L^f_b}H.
\end{equation}

\begin{proof}
By  Bochner formula of the vector field $X$ over $M$\cite{ma}, we have
\begin{equation}\label{X}
   \frac{1}{2}\Delta |X|^2 = |\nabla X|^2 + \text{Div}(L_Xg)(X)-Ric(X,X)-\nabla_X \text{Div}(X)
\end{equation}
Now, we take $X=u=\frac{\nabla f}{|\nabla f|}$ in (\ref{X}), then, $$Ric(u,u)= |\nabla u|^2 + \text{Div}(L_u g)(u) - \nabla_u \text{Div}(u).$$

For any $x \in L^f_t$, we obtain that $u(x) = \frac{\nabla f(x)}{|\nabla f|(x)}$ is the unit normal vector field of $L_t^f$ at the point of $x \in L^f_t$ and then we assume that $\{e_i,u\}_{i=1}^{n-1}$ forms an orthonormal base in $T_x M$, hence
$$|A|^2 =|\nabla u|^2,\ H=\text{Div}(u).$$

Moreover,  by using integration by parts over $f^{-1}([a,b])$ for $\nabla_u (\text{Div}(u))$, we obtain,
$$
\begin{array}{cl}
     & -\int_{f^{-1}([a,b])}\nabla_u (\text{Div}(u)) \\
     =& -\int_{f^{-1}([a,b])} \nabla_u H\\
     =& -\int_{f^{-1}([a,b])} \text{Div}(Hu)- H\text{Div}(u)\\
     =& -\int_{f^{-1}([a,b])} \text{Div}(Hu)-H^2\\
     =& \int_{f^{-1}([a,b])} H^2 +\int_{L^f_a} H- \int_{L^f_b} H.\\
\end{array}
$$

For the term $\text{Div}(L_u g)(u)$,
$$
\begin{array}{cl}
     &  \int_{f^{-1}([a,b])}\text{Div}(L_u g)(u)\\
     =& \int_{f^{-1}([a,b])}\nabla_{e_i} (L_u g)(e_i,u)\\
     =& \int_{f^{-1}([a,b])}\nabla_{e_i}(L_u g(u,e_i))- (L_ug)(e_i, \nabla_{e_i} u)\\
     =& \int_{f^{-1}([a,b])}\nabla_{e_i}((L_ug)u,e_i))- (L_ug)(e_i, \nabla_{e_i} u)\\
     =&-2\int_{f^{-1}([a,b])}|A|^2. \\
\end{array}
$$
Hence,
$$\int_{f^{-1}([a,b])} Ric(u,u) = \int_{f^{-1}([a,b])} G - \int_{L^f_b} H+  \int_{L^f_a} H$$

\end{proof}
\end{lemma}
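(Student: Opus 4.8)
The plan is to run the Bochner technique for vector fields on the unit normal field $u=\frac{\nabla f}{|\nabla f|}$ itself, rather than on $f$, and then integrate over the region $\Omega = f^{-1}([a,b])$. I would start from the Bochner--Weitzenb\"ock identity for a vector field $X$,
\begin{equation*}
\tfrac12\Delta|X|^2 = |\nabla X|^2 + \text{Div}(L_X g)(X) - Ric(X,X) - \nabla_X\text{Div}(X),
\end{equation*}
and take $X=u$. This is the natural move: since $|u|\equiv 1$ the left-hand side vanishes identically, and one is left with the pointwise identity
\begin{equation*}
Ric(u,u) = |\nabla u|^2 + \text{Div}(L_u g)(u) - \nabla_u H,
\end{equation*}
where $H = \text{Div}(u)$. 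The whole argument is then a matter of integrating this over $\Omega$ and recognizing the geometry of the level sets in each term.

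For the last term I would write $\nabla_u H = \langle\nabla H, u\rangle$ and use the product rule $\text{Div}(Hu) = \nabla_u H + H\,\text{Div}(u) = \nabla_u H + H^2$; the divergence theorem on $\Omega$, whose boundary is $L^f_a \cup L^f_b$ with outward conormals $-u$ and $+u$ respectively, then converts $\int_\Omega \nabla_u H$ into $\int_{L^f_b}H - \int_{L^f_a}H - \int_\Omega H^2$. For the Lie-derivative term I would integrate $\text{Div}(L_u g)(u)$ by parts, working in an orthonormal frame $\{e_1,\dots,e_{n-1},u\}$ adapted to the level sets. The geometric dictionary I need is that $\text{tr}(\nabla u) = H$ and $\text{tr}\big((\nabla u)^2\big) = |A|^2$; the second holds because $\langle\nabla_\cdot u, u\rangle = 0$ forces the $u$-column of $\nabla u$ to vanish and because the shape operator $A=\nabla_{L^f_t}u$ is symmetric, so the cross terms drop out. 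Assembling the pieces should collapse the right-hand side to $\int_\Omega(H^2-|A|^2)+\int_{L^f_a}H-\int_{L^f_b}H = \int_\Omega G + \int_{L^f_a}H - \int_{L^f_b}H$, which is the claim.

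The step I expect to fight with is the bookkeeping of the normal acceleration $\nabla_u u$, which does not vanish in general because the integral curves of $u$ need not be geodesics. It enters twice: first, $|\nabla u|^2 = |A|^2 + |\nabla_u u|^2$ rather than simply $|A|^2$; and second, the by-parts computation of $\text{Div}(L_u g)(u)$ produces both a genuine divergence $\text{Div}(\nabla_u u)$ and a spurious $-|\nabla_u u|^2$. The plan hinges on two cancellations: the two copies of $|\nabla_u u|^2$ cancel between $|\nabla u|^2$ and $\text{Div}(L_u g)(u)$, and $\int_\Omega \text{Div}(\nabla_u u)=0$ because its boundary contribution $\int_{\partial\Omega}\langle\nabla_u u, \pm u\rangle$ vanishes pointwise from $\langle\nabla_u u, u\rangle = 0$. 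Verifying that these acceleration terms organize themselves in exactly this way---so that the final identity is clean and holds with no hypothesis beyond $f$ having no critical point on $[a,b]$---is where the care is required; everything else is routine divergence-theorem accounting.
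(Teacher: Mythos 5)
Your proposal is correct and follows essentially the same route as the paper: the vector-field Bochner formula applied to $X=u$, with $|u|\equiv 1$ killing the left-hand side, followed by the same two integrations by parts over $f^{-1}([a,b])$. You are in fact more careful than the paper on the one delicate point: the paper simply asserts $|\nabla u|^2=|A|^2$ and sums $\mathrm{Div}(L_ug)(u)$ only over the tangential frame, which is literally valid only when the integral curves of $u$ are geodesics (as for the distance function used later in Theorem \ref{polecase}), whereas you correctly keep $|\nabla u|^2=|A|^2+|\nabla_u u|^2$, check that the two copies of $|\nabla_u u|^2$ cancel between $|\nabla u|^2$ and $\mathrm{Div}(L_ug)(u)$, and observe that $\int_{f^{-1}([a,b])}\mathrm{Div}(\nabla_u u)=0$ because $\langle\nabla_u u,u\rangle=0$ on the boundary.
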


\begin{remark}
From the perspective of function theory on a complete manifold: Sectional curvature would impose the condition on the Hessian of functions or the second fundamental form of level set. Ricci curvature would impose the condition on the Laplacian of functions or the mean curvature of the level set. Lemma \ref{bf} seems trivial, but it is deep for the author, since the integral of the second fundamental form can be expressed in terms of the mean curvature and the Ricci curvature on the ambient Riemannian manifold.
\end{remark}
\bigskip

\subsection{Width and Positive Scalar Curvature}

Let's introduce the result related to the positive scalar curvature to  our article. The study of non-negative and positive scalar curvature is a  very important topic in geometry analysis, which is pioneered by the works \cite{gromovlawson, SY} of Gromov-Lawson and Schoen-Yau. These works provide us two paths of the understandings of the geometry and topology of scalar curvature bounded below: spin techniques and minimal surface techniques. According to their works, it's known that $\mathbb{T}^n$ admits no complete Riemannian metric with non-negative scalar curvature unless it is flat.  In recent Gromov's work \cite{gromovfourlectures}, he introduces the $\mu$ bubble,  which is detailed by Zhu in his work \cite{zhu} where his result indicates that positive scalar curvature implies that $2$-systole is bounded above in terms of the lower bound of the scalar curvature. After that, many applications of $\mu$ bubble have been expanded to study the existence of Riemannian metric with positive scalar curvature \cite{cl,gromovaspherical}. Here, we will use the following result, which relates the size to the positive scalar curvature \cite{metricinequality}.

\bigskip
Suppose that $M^{n} = T^{n-1} \times I$ where $I$ is an interval $[a,b], a < b$. Here, $M$ is called a torical band. We define
$$\partial M= T^{n-1}\times \{b\} \bigcup T^{n-1} \times \{a\} = : \partial_+ M \bigcup \partial_-M.$$
then, 
\begin{equation}
    d(\partial_+ M, \partial_-M) =\inf_{x \in \partial_+ M, y \in \partial_- M } \{d(x,y)\}.
\end{equation}
Then, the following theorem holds

\begin{theorem}[G-WXY \cite{metricinequality, gromovfourlectures, wxy}]\label{toricalband}
Let $(M,g)$ be a $n$-dimensional torical band with $Sc(g) \geq n(n-1)$. Then,
\begin{equation}
    \ d(\partial_+ M, \partial_-M) <\frac{2\pi}{n}.
\end{equation}
\end{theorem}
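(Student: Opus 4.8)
The plan is to argue by contradiction with Gromov's warped $\mu$-bubble method, which is the standard route to the sharp constant $\frac{2\pi}{n}$; I will indicate at the end the Dirac-operator alternative that removes the dimensional restriction. Suppose, contrary to the claim, that $\ell := d(\partial_+ M, \partial_- M) \geq \frac{2\pi}{n}$. Writing $d_\pm(x) = d(x,\partial_\pm M)$, I would first fix a smooth function $\rho : M \to [-\frac{\ell}{2}, \frac{\ell}{2}]$ with $|\nabla \rho| \leq 1$ approximating the signed distance to the central slice, so that $\rho \to -\frac{\ell}{2}$ along $\partial_- M$ and $\rho \to +\frac{\ell}{2}$ along $\partial_+ M$. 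On the open interval $(-\frac{\ell}{2},\frac{\ell}{2})$ I then choose a decreasing prescribing function $h$ with $h(t)\to +\infty$ as $t\to -\frac{\ell}{2}$ and $h(t)\to -\infty$ as $t\to +\frac{\ell}{2}$; these blow-ups act as barriers forcing the bubble to stay in the interior of $M$.

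With $\bar h := h\circ\rho$ as prescribing function, consider the functional $\mathcal{A}(\Omega) = \mathcal{H}^{n-1}(\partial^{*}\Omega) - \int_M (\chi_\Omega - \chi_{\Omega_0})\,\bar h$ over Caccioppoli sets $\Omega$ that coincide with a fixed reference $\Omega_0$ near $\partial M$. The barrier behaviour of $h$ guarantees a minimizer $\Sigma = \partial^{*}\Omega$ contained in the interior and separating $\partial_- M$ from $\partial_+ M$, hence homologous to the slice $T^{n-1}\times\{pt\}$. The first variation yields the prescribed mean curvature equation $H_\Sigma = \bar h$, and minimality gives the stability inequality $\int_\Sigma |\nabla\psi|^2 \geq \int_\Sigma (|A|^2 + Ric(\nu,\nu) + \partial_\nu \bar h)\psi^2$ for all $\psi$. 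Substituting the traced Gauss equation $Ric(\nu,\nu) = \frac12(Sc_M - Sc_\Sigma + H^2 - |A|^2)$, using $|A|^2 \geq \frac{H^2}{n-1}$, $H = \bar h$, $Sc_M \geq n(n-1)$, and $\partial_\nu \bar h \geq h'(\rho)$ (which follows from $h'\le 0$ and $|\nabla\rho|\leq1$), I obtain
\[
\int_\Sigma |\nabla\psi|^2 + \tfrac12 Sc_\Sigma\,\psi^2 \;\geq\; \int_\Sigma \Big( h'(\rho) + \tfrac{n}{2(n-1)}h(\rho)^2 + \tfrac{n(n-1)}{2}\Big)\psi^2 .
\]
The constant enters here: the Riccati equation $-h' = \frac{n}{2(n-1)}h^2 + \frac{n(n-1)}{2}$ has maximal blow-up interval of length \emph{exactly} $\frac{2\pi}{n}$ (integrate to get $\frac{2}{n}\arctan\frac{h}{n-1} = C-t$), so when $\ell > \frac{2\pi}{n}$ one may choose $h$ with $h'(\rho) + \frac{n}{2(n-1)}h(\rho)^2 + \frac{n(n-1)}{2} > 0$ throughout, making the potential on the right strictly positive.

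Thus the Schrödinger operator $-\Delta_\Sigma + \frac12 Sc_\Sigma$ is strictly positive, which (after a conformal change, or by re-running a $\mu$-bubble on $\Sigma$) endows $\Sigma^{n-1}$ with positive scalar curvature. On the other hand, the band projection $M \to T^{n-1}$ restricts to a degree-one map $\Sigma \to T^{n-1}$, so $\Sigma$ is enlargeable and, by Schoen--Yau and Gromov--Lawson, admits \emph{no} metric of positive scalar curvature. This contradiction forces $\ell < \frac{2\pi}{n}$. The torus non-existence input can itself be proved in the same spirit, inducting on dimension: each step produces a stable hypersurface carrying positive generalized scalar curvature, terminating at a $2$-torus where Gauss--Bonnet rules out total positive curvature.

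The main obstacle is twofold. First, the existence and regularity of the minimizing $\mu$-bubble: the minimizer is a smooth embedded hypersurface only for $n \leq 7$, and beyond that one must invoke generic regularity or recent higher-dimensional techniques. This is precisely why the cited authors (G--WXY) instead run a Dirac-operator argument, twisting the spinor bundle by a Callias potential built from $h\circ\rho$; a vanishing theorem for the index of the resulting operator forces the band to be narrow, delivering the sharp $\frac{2\pi}{n}$ in all dimensions without any regularity issue. Second, upgrading $\leq$ to the strict inequality requires the rigidity analysis of the borderline case $\ell = \frac{2\pi}{n}$: there the potential above is identically zero, so all the intermediate inequalities ($Sc_M \equiv n(n-1)$, $|A|^2 = \frac{H^2}{n-1}$, $|\nabla\rho|\equiv 1$) are saturated, forcing the extremal warped-product structure on $M$; one then checks this model cannot be realized by a closed torical band realizing that distance, so the inequality is strict.
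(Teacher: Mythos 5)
This theorem is not proved in the paper at all: it is imported verbatim from Gromov \cite{metricinequality, gromovfourlectures} and Wang--Xie--Yu \cite{wxy} and used as a black box in the proof of Theorem \ref{mainvolumegrowth}, so there is no internal proof to compare against. Your sketch is a faithful reconstruction of the standard warped $\mu$-bubble argument from those sources: the stability inequality for the prescribed-mean-curvature hypersurface, the traced Gauss equation, the inequality $|A|^2 \geq H^2/(n-1)$, and the Riccati equation $-h' = \tfrac{n}{2(n-1)}h^2 + \tfrac{n(n-1)}{2}$ whose solution $h(t) = -(n-1)\tan(nt/2)$ blows up on an interval of length exactly $2\pi/n$ are all the right ingredients, and the descent to the non-existence of PSC metrics on manifolds admitting degree-one maps to $T^{n-1}$ is the correct endgame. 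You also correctly identify the two genuine gaps in a bare $\mu$-bubble treatment --- regularity of the minimizer for $n \geq 8$ (which is why \cite{wxy} runs a Callias-type Dirac operator argument instead) and the strictness of the inequality at the borderline width $\ell = 2\pi/n$ (handled in \cite{metricinequality} by observing that the rigid warped-product model $dt^2 + \cos^{4/n}(nt/2)\,g_{T^{n-1}}$ degenerates at the endpoints and so cannot be realized by a band with two torus boundary components). The only loose joint is that your main inequality is stated for $\ell > 2\pi/n$ while the contradiction hypothesis is $\ell \geq 2\pi/n$; as written, the equality case is deferred entirely to the rigidity discussion at the end, which would need to be carried out in full for the proof to close. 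As a proof proposal for the cited result this is sound; within the present paper nothing needs to be supplied, since the theorem is quoted, not proved.
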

Theorem \ref{toricalband} plays a vital role in the proof of Theorem \ref{mainvolumegrowth}. Combing it with the work \cite{MT} of  McLeod-Topping, we avoid analyzing the singular Ricci limiting space to achieve our goal.

\begin{theorem}[Spherical Lipschitz Bound Theorem \cite{metricinequality}] \label{slb}
Let $(M^n,g)$ be a Riemannian manifold(possibly incomplete) with $Sc(g) \geq n(n-1)$. Then, for all continuous maps $f$  from $M$ to 
  the unit sphere $\mathbb{S}^n$ (and also  to the hemisphere to $\mathbb{S}^n_+$(1) of 
  non-zero degrees, we have,
   $$Lip(f)> \frac  {c}{\pi\sqrt n}\mbox { for the above  }  c>\frac {1}{3}.$$
\end{theorem}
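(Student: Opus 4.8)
The plan is to argue by contradiction through an index-theoretic (Dirac operator) obstruction, which is the standard route to lower bounds on the Lipschitz constant of topologically essential maps into the round sphere, in the spirit of Llarull and Gromov--Lawson. Suppose toward a contradiction that a non-zero degree map $f\colon M^n\to\mathbb{S}^n$ satisfies $\mathrm{Lip}(f)\le \frac{c}{\pi\sqrt n}$. The non-zero degree means that $f$ is essential, and I would encode this analytically by a twisted Dirac operator: twist the spinor bundle of $M$ by the pullback $E=f^{*}\mathcal{S}$, where $\mathcal{S}\to\mathbb{S}^n$ is chosen so that the index of $D_E$ equals $\deg(f)$ times a non-vanishing characteristic number of $\mathbb{S}^n$ (for the hemisphere / relative version one imposes a suitable Atiyah--Patodi--Singer or Callias-type boundary condition). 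Assuming the relevant index theory applies, this forces $\mathrm{ind}(D_E)\ne 0$, so $D_E$ has a nontrivial kernel.

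The second step is a Weitzenb\"ock positivity estimate contradicting the existence of that kernel. By the Lichnerowicz formula,
\begin{equation*}
D_E^2=\nabla^{*}\nabla+\tfrac14 Sc+\mathcal{R}^{E},
\end{equation*}
where $\mathcal{R}^{E}$ is the curvature endomorphism of the twist. The key point is that $E$ is pulled back from $\mathbb{S}^n$, whose curvature operator is bounded (sectional curvature $\equiv 1$), and curvature pulls back with a factor of $\mathrm{Lip}(f)^2$ since it is a $2$-form quantity and $f$ contracts $2$-planes by at most $\mathrm{Lip}(f)^2$. Summing the contributions over coordinate directions produces a dimensional factor, so that $\|\mathcal{R}^{E}\|\le K(n)\,\mathrm{Lip}(f)^2$. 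Once $\tfrac{n(n-1)}{4}>K(n)\,\mathrm{Lip}(f)^2$, the zeroth-order term $\tfrac14 Sc+\mathcal{R}^{E}$ is strictly positive, forcing $\ker D_E=0$ and $\mathrm{ind}(D_E)=0$, contradicting the previous step. Solving the threshold inequality yields a lower bound of the shape $\mathrm{Lip}(f)>\mathrm{const}/\sqrt n$. Because a robust construction cannot exploit the optimal spinor subspace that yields Llarull's sharp constant $1$, the curvature estimate is necessarily cruder and loses a dimensional factor, which is exactly why the resulting bound has order $1/\sqrt n$ rather than the sharp $1$; the explicit $\pi$ and the value $c>\tfrac13$ then come from the boundary/hemisphere normalization together with a careful tracking of the constants in $K(n)$.

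The main obstacle, and where essentially all the real work lies, is making the index theory legitimate under the stated hypotheses, since $M$ is only assumed to be a (possibly incomplete, possibly non-spin) manifold. To remove the spin assumption I would replace the spinor bundle by a Clifford-module ($\mathbb{C}\mathrm{l}_n$-linear) Dirac bundle, or pass to the Wang--Xie--Yu framework, where a non-zero degree still produces a non-vanishing higher/localized index without a global spin structure. To handle incompleteness and the lack of compactness I would localize: the Weitzenb\"ock term is strictly positive wherever $f$ is sufficiently contracting, so the analysis can be confined to a compact piece carrying the degree via a relative index theorem or a Callias-type operator, controlling the contributions from the boundary and from any region where positivity may fail. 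An alternative that sidesteps spin altogether is the $\mu$-bubble / minimal-band method: slice $M$ by the pullback of the height function on $\mathbb{S}^n$, observe that the non-zero degree forces a ``spherical band'' of controlled width inside $M$, and apply a band-width inequality in the spirit of Theorem \ref{toricalband}; there the delicate step is bookkeeping how the slicing interacts with $\mathrm{Lip}(f)$. In either route the crux is not the curvature estimate itself but establishing the topological lower bound on the index (or the essentialness of the band) in the non-spin, non-compact setting, and then tracking constants to land exactly at $\frac{c}{\pi\sqrt n}$.
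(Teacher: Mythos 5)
First, a point of comparison: the paper does not prove Theorem \ref{slb} at all --- it is quoted verbatim from Gromov's \emph{Metric Inequalities with Scalar Curvature} \cite{metricinequality} and used as a black box in the proof of Corollary \ref{bnvolumegrowth}. So there is no internal proof to measure your attempt against; the relevant benchmark is Gromov's own argument.

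Measured against that, your primary (Dirac-operator) route has a genuine gap that your own hedging does not repair. The theorem is asserted for an arbitrary, possibly incomplete, possibly non-spin $M^n$. The Lichnerowicz--Llarull mechanism requires a Dirac operator acting on an honest (twisted) spinor bundle, and for a non-spin $M$ no such operator exists; replacing it by a $\mathbb{C}\mathrm{l}_n$-linear Dirac bundle or appealing to the Wang--Xie--Yu framework does not remove the spin hypothesis --- those arguments are still index-theoretic and still need a spin structure on the relevant piece of $M$, and whether Llarull-type bounds hold for general non-spin manifolds is open. This is therefore not a deferrable technicality but the point where the proposed proof fails for the stated class of manifolds. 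Moreover, when the index argument does apply (compact spin $M$, suitably contracting $f$), it yields the sharp, dimension-free constant $1$; the specific shape $\frac{c}{\pi\sqrt{n}}$ with $c>\frac{1}{3}$ is not the output of a crude Weitzenb\"ock estimate but of a different mechanism entirely. Your ``alternative'' sketch is in fact the actual proof: one pulls back a band between two parallel hyperspheres of $\mathbb{S}^n$, uses the non-zero degree to see that the resulting band in $M$ is essential in the appropriate (over-torical) sense, and applies the $\frac{2\pi}{n}$ band-width inequality in the spirit of Theorem \ref{toricalband}, iterating the slicing through lower-dimensional bands; the $\sqrt{n}$ loss and the value of $c$ come from the bookkeeping of that iteration, and the minimal-hypersurface/$\mu$-bubble input is precisely what lets the statement hold with no spin or completeness hypothesis. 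So the viable route is the one you relegated to a closing remark, and the route you developed in detail cannot prove the statement as written.
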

\vskip 2mm

Finally, the following lemma is also needed for the proof of Theorem \ref{mainvolumegrowth}.

\begin{lemma}\label{independentgeodesic}
Let $(M^n,g)$ be a complete, non-compact Riemannian manifold with $Ric(g) \geq 0$. If there exists a sequence of $p_i\rightarrow \infty$ and $R_i\rightarrow \infty$ such that $$\text{vol}(B(p_i,R_i))=c(R_i)R_i^{k}, n-1  \geq k \geq 1. $$
with $c(R_i) \rightarrow \infty$ as $i \rightarrow \infty$ and $vol(B(p,1)) \geq v >0$ for all $p \in M$,
then there exists a sequence $q_i \in M$ such that $(M,q_i)$ pointedly Gromov Hausdorff converges to a length space $(X\times \mathbb{R}^l, p_\infty)$ with $$l \geq k+1.$$
Hence, it implies that there exists at least $k+1$ rays $\{\gamma_l^{(i)}\}_{l=1}^{k+1}$ which are linearly independent and orthogonal at $q_i$ in $M$ such that for all $l=1,2,\cdots, k, k+1$, the length of $\gamma_{l}^{(i)}$, $L(\gamma_{l}^{(i)}) \rightarrow \infty$ as $i \rightarrow \infty$.

\begin{proof}
Since we assume that $Ric(g) \geq 0$, by the precompactness theorem and Cheeger-Colding theory \cite{cc}, we obtain that, up to subsequence,  $(M,p_i,vol_i)$ converges to a metric measured length space $(X,x_\infty, \mu_\infty)$ with a Borel measure $\mu_\infty $ on $X$. Moreover, since it is assumed that $vol(B(p,1)) \geq v >0$, for any geodesic ball $B(p_i,r) \subset M$ and $B(x_\infty, r) \subset X$, we have,
$$\lim_{i \rightarrow \infty } vol(B(p_i,r)) = \mu_{\infty}(B(x_\infty, r)).$$
Furthermore,  let $\gamma_i: [0,\infty) \rightarrow M$ be a ray with $\gamma_i(0)=p_i$, then we introduce that 
$$\sigma_i(t)=\gamma_i(t+R_i): [-R_i,\infty) \rightarrow M.$$
and we set $q_i = \sigma_i(0)$.
By the assumption of the volume growth, we have, 
$$vol(B(q_i,2R_i) \geq c(R_i)R_i^{k}.$$
Hence, we can replace $p_i$ by $q_i$ in  the precompactness theorem. So we have $\sigma_i$ will converge to a line $\sigma_\infty$ in $X$. By the splitting theorem in the Ricci limiting space \cite{cc},  we obtain
$$X=X_1\times \mathbb{R} \ \text{and} \ \mu_\infty = \mu_\infty^1\times \mathbb{R}.$$

If we let $q_i \rightarrow  q_\infty $ and  $R_i$ large, then
$$\mu_\infty (B(q_\infty, R_i) \geq c(R_i)R_i^k, c(R_i) \rightarrow \infty, \ \text{as}\  R_i \rightarrow \infty.$$
Here $c(r)$ may be different line by line.
Hence, for metric ball $B_1(q_\infty^1,R_i) \subset X_1$,
$$\mu_\infty^1 (B_1(q_\infty^1,R_i) \geq c(R_i)R_i^{k-1}, \text{and} \  c(R_i) \rightarrow \infty, \ \text{as}\  R_i \rightarrow \infty. $$
Here, $q_\infty^1 $ is from $q_\infty= (q_\infty^1,x^n), x^n \in \mathbb{R}$. Hence, we obtain that $(X_1, \mu_\infty^1)$ is a non-compact metric measured length space. Then, there exists a ray in $X_1$, we can retake our base point in $M$ to obtain a line associated with the ray as we did above by pulling back the ray to $M$. Hence, we have
$$(X_1=X_2\times \mathbb{R} , \mu_\infty^1=\mu_\infty^2 \times \mathbb{R}).$$
Finally, we continue this process $k-1$ times to obtain  the limiting space
$$(X_{k-1}=X_{k} \times \mathbb{R},\  \mu^k = \mu_\infty^{k-1} \times \mathbb{R})$$
and for any metric ball $B(q_\infty^k,R_i ) \subset X^k$, 
$$vol(B(q_\infty^k,R_i ))) \geq c(R_i) , , c(R_i) \rightarrow \infty, \ \text{as}\  R_i \rightarrow \infty.$$
Here $q_\infty^{k-1}=(q_\infty^k, x^{n-k+1}), x^{n-k+1} \in \mathbb{R}$. Hence, $X_k$ is still non-compact, otherwise, its volume should be finite. Hence, by the same argument above, we obtain that $X_k$ splits as $X_{k+1}\times \mathbb{R}$. Hence, we finally find a  sequence  $q_i \in M$ such that $(M,q_i)$ pointedly Gromov Hausdorff converges to $X\times \mathbb{R}^{k+1}$.  We complete the proof of the lemma. 
\end{proof}
\end{lemma}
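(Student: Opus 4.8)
The plan is to combine the Cheeger--Colding precompactness and splitting theory with a basepoint-shifting trick that converts the superlinear volume growth into Euclidean splitting factors. First I would invoke that under $Ric(g)\geq 0$ and the uniform non-collapsing bound $\mathrm{vol}(B(p,1))\geq v>0$, the sequence $(M,p_i,\mathrm{vol})$ is precompact in the pointed measured Gromov--Hausdorff topology, so after passing to a subsequence it converges to a metric measure length space $(X,x_\infty,\mu_\infty)$, and non-collapsing guarantees that $\mu_\infty$ is the limit of the (renormalized) volumes, so that balls converge in measure. The crucial feature of the hypothesis is that $\mathrm{vol}(B(p_i,R_i))=c(R_i)R_i^{k}$ with $c(R_i)\to\infty$: the volume grows strictly faster than any fixed multiple of $R_i^{k}$, and it is this strict divergence, rather than a fixed constant, that will drive the iteration.

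The main engine is the following shift. Pick a ray $\gamma_i$ from $p_i$ and re-center at $q_i=\gamma_i(R_i)$, so that $\sigma_i(t)=\gamma_i(t+R_i)$ runs on $[-R_i,\infty)$ and converges, as $R_i\to\infty$, to a full line through the new limit basepoint; by the splitting theorem in the Ricci limiting space \cite{cc}, $X$ isometrically (and measure-)splits as $X_1\times\mathbb{R}$. I would then carry out the volume bookkeeping: since $B(p_i,R_i)\subset B(q_i,2R_i)$, re-centering only enlarges the relevant balls, so the diverging growth $c(R_i)R_i^{k}$ persists; splitting off the $\mathbb{R}$-factor divides out one power of $R_i$, leaving the factor $X_1$ with volume growth $\gtrsim c(R_i)R_i^{k-1}$, still with $c(R_i)\to\infty$. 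I would iterate this $k$ times, at each stage producing a ray in the current factor, pulling it back to a line in $M$ by re-centering, and splitting off another $\mathbb{R}$. After $k$ steps the residual factor $X_k$ has volume growth $\gtrsim c(R_i)\to\infty$, hence is non-compact (a compact factor would have uniformly bounded finite volume); therefore it contains a ray, giving one more line and a final splitting $X_k=X_{k+1}\times\mathbb{R}$. This yields the claimed $X\times\mathbb{R}^{l}$ with $l\geq k+1$.

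Finally I would transfer the $k+1$ orthogonal lines in the $\mathbb{R}^{k+1}$-factor back to $M$. Using the pointed Gromov--Hausdorff approximation near $q_i$, each coordinate line lifts to an almost-minimizing segment through $q_i$ of length comparable to $R_i\to\infty$, and the $k+1$ directions, being mutually orthogonal in the limit, are nearly orthogonal at $q_i$; replacing them by genuine minimizing geodesics and letting $i\to\infty$ produces the desired independent, orthogonal rays $\{\gamma_\ell^{(i)}\}_{\ell=1}^{k+1}$ with $L(\gamma_\ell^{(i)})\to\infty$.

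The hard part will be the bookkeeping in the iteration that guarantees the residual factor keeps a \emph{diverging} (not merely positive) volume lower bound, so that the $(k+1)$-th splitting remains available; this is exactly where the strict divergence $c(R_i)\to\infty$ is indispensable, and where I must check that re-centering does not destroy the growth and that splitting off a line divides the volume growth by precisely one power of the radius. A secondary subtlety is upgrading the \emph{approximate} orthogonality of the pulled-back directions to honest orthogonal rays at $q_i$; I would handle this by working in the splitting structure of the limit, where the directions are exactly orthogonal, and transferring orthogonality through the Gromov--Hausdorff approximation with an error that vanishes as $i\to\infty$.
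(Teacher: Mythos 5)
Your proposal is correct and follows essentially the same route as the paper: measured Gromov--Hausdorff precompactness under non-collapsing, re-centering along a ray at $q_i=\gamma_i(R_i)$ to produce a line and invoke the Cheeger--Colding splitting, then iterating with the same volume bookkeeping (dividing out one power of $R_i$ per split and using $c(R_i)\to\infty$ to keep the residual factor non-compact for the $(k+1)$-th splitting). Your added discussion of transferring the orthogonal limit lines back to approximate rays at $q_i$ is a reasonable elaboration of a step the paper leaves implicit.
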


\subsection{Proof of Proposition \ref{completegreen}}

Before we are going to the  proof, let's first see the compact case: Setting $l=\mathrm{conj}(M)$, we consider
any geodesic ball $B(m,l), m \in M$ and any $q \in \partial B(m,l)$, there exists arc length parameter $\gamma: [0,l] \rightarrow M$ which is the shortest geodesic connecting $m,q$ . Then, we consider the index form for any variational vector $X$ of $\gamma(t)$ with $X(0)=X(l)=0$ 
$
0 \leq I(X,X)= \int_0^l \|\nabla X\|^2 - \langle R(X, \gamma^\prime(t))\gamma^\prime(t), X\rangle ds.
$ Hence,
\begin{equation}
     \int_0^l \langle R(X, \gamma^\prime(t))\gamma^\prime(t), X\rangle ds \leq \int_0^l \|\nabla X\|^2ds.
\end{equation}
If we pick $X=sin(\frac{\pi}{l}t)\nu$ with $\nu = \nu(t)$ a parallel unit vector field along $\gamma(t)$, then
\begin{equation}
    \int_0^l sin^2(\frac{\pi}{l}t)\langle R(\nu, \gamma^\prime(t))\gamma^\prime(t), \nu \rangle dt \leq  (\frac{\pi}{l})^2\int_0^l sin^2(\frac{\pi}{l})dt.
\end{equation}
By a direct calculation, we obtain
$ (\frac{\pi}{l})^2 \int_0^l sin^2(\frac{\pi }{l} t) dt= \frac{\pi^2}{2l}$ and then
$$ \int_0^l sin^2(\frac{\pi}{l}t)\langle R(\nu, \gamma^\prime(t))\gamma^\prime(t), \nu \rangle dt \leq \frac{\pi^2}{2l}.$$

$$\int_0^l sin^2(\frac{\pi}{l}t) Ric(\gamma^\prime(t), \gamma^\prime(t))dt \leq \frac{(n-1)\pi^2}{2l} .$$
Integrating over the unit tangent bundle $SM$, we have,
\begin{equation} \label{stableinequality}
   \int_{SM} \int_0^l sin^2(\frac{\pi}{l}t) Ric(\gamma^\prime(t), \gamma^\prime(t))dt dL \leq  \int_{SM} \frac{(n-1)\pi^2}{2l} d L .
\end{equation}
For the integral on the left, we have,

\begin{align*}
    & \ \ \ \int_{SM} \int_0^l sin^2(\frac{\pi}{l}t) Ric(\gamma^\prime(t), \gamma^\prime(t))dt dL
\\
&= \ \int_0^l \int_{SM} sin^2(\frac{\pi}{l}t) Ric(\gamma^\prime(t), \gamma^\prime(t)) dLdt\\
&= \ \int_0^l \int_{m\in M} \int_{v \in S_mM} sin^2(\frac{\pi}{l}t) Ric((\varphi_{t})_\star v, (\varphi_{t})_\star v)dv d\mathrm{vol}(m) dt \\
&= \ \int_0^l sin^2({\frac{\pi}{l}}{t}) dt\int_{m \in M}\int_{S_m M}Ric(v)dvd\mathrm{vol}(m)\\
& = \ \frac{\mathrm{vol}(\mathbb{S}^{n-1})}{n} \int_0^l sin^2({\frac{\pi}{l}}{t})dt \int_{m \in M} Sc(m)  d \mathrm{vol}(m) \\
&\geq \ \frac{(n-1)l }{2} \mathrm{vol}(\mathbb{S}^{n-1}) \mathrm{vol}(M) \ \ \text{since $Sc \geq n(n-1)$}. \\ 
\end{align*}
By a direct calculation, we obtain that
$$\int_{SM} \frac{(n-1)\pi^2}{2l} d L = \frac{(n-1)\pi^2}{2l}\mathrm{vol}(\mathbb{S}^{n-1})\mathrm{vol}(M).$$
Hence, $$l \leq \pi.$$
Finally, if $l=\pi$, then all above inequalities are equalities. Hence, $M$ has constant sectional curvature $K=1$ with $\mathrm{Diam(M)}=\mathrm{Inj}(M)= \pi$. Hence, by Theorem \ref{mdt}, we obtain that $M$ is isometric to the round sphere $\mathbb{S}^{n}$.

Now, let's come back to the proof of Proposition \ref{completegreen}:  Rather than integrating over the unit vector bundle $SM$, we consider the geodesic ball $B=B(p,r) \subset M$ and $B_{-l}= B(p, r-l)$. Then we start from inequality (\ref{stableinequality}),
$$   \int_{SB} \int_0^l sin^2(\frac{\pi}{l}t) Ric(\gamma^\prime(t), \gamma^\prime(t))dt dL \leq  \int_{SB} \frac{(n-1)\pi^2}{2l} d L .$$

\begin{align*}
    & \ \ \ \int_{SB} \int_0^l sin^2(\frac{\pi}{l}t) Ric(\gamma^\prime(t), \gamma^\prime(t))dt dL
\\
&= \ \int_0^l \int_{SB} sin^2(\frac{\pi}{l}t) Ric(\gamma^\prime(t), \gamma^\prime(t)) dLdt\\
&\geq \ \int_0^l sin^2({\frac{\pi}{l}}{t}) dt\int_{m \in M}\int_{S_m B_{-l}}Ric(v)dvd\mathrm{vol}(m), Ric(g) \geq 0 \ \text{on}\ B(p,R)\\
& = \ \frac{\mathrm{vol}(\mathbb{S}^{n-1})}{n} \int_0^l sin^2({\frac{\pi}{l}}{t})dt \int_{m \in B_{-l}} Sc(m)  d \mathrm{vol}(m). \\
\end{align*}
Hence, $$\int_{B(p, r-l)} Sc \leq n(n-1) \frac{\pi^2}{l^2}vol(B(p,r)).$$

Moreover, if we assume that $Ric(g) \geq 0$ and $Sc(g) \geq n(n-1)$ on $M$, then for any $r > l$, 
$$\frac{\mathrm{vol}{B(p,r-l)}}{\mathrm{vol}(B(p,r))} \leq \frac{\pi^2}{l^2}.$$
By volume comparison theorem,  we obtain,
$$\lim_{r \rightarrow \infty}\frac{\mathrm{vol}{B(p,r-l)}}{\mathrm{vol}(B(p,r))}=1.$$
Therefore $l \leq \pi$. We complete the proof of Proposition \ref{completegreen}.

As a corollary, we have, 
\begin{corollary}
Let $(M^n, g)$ be a complete, non-compact manifold with $Ric(g) \geq 0$ with $inj(M)=c>0$. Then, for any $p \in M$,
$$\frac{1}{\mathrm{vol}(B(p,c))}\int_{B(p,c)}Sc \leq \frac{2^nn(n-1)}{c^2}$$
\end{corollary}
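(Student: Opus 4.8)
The plan is to obtain this as an immediate consequence of Proposition \ref{completegreen} combined with the Bishop--Gromov volume comparison theorem; no new idea is required. First I would record that the hypothesis $\mathrm{inj}(M) = c > 0$ forces $c \leq \mathrm{conj}(M)$, since in general $\mathrm{inj}(M) \leq \mathrm{conj}(M)$. This is exactly what is needed for the inward-shift parameter in the first bullet of Proposition \ref{completegreen} to be allowed to take the value $c$.

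Next I would apply the local scalar-curvature estimate (\ref{conjugateiscalarintegral}) on the geodesic ball $B(p, 2c)$---where $Ric(g) \geq 0$ holds by assumption---using shift parameter equal to $c$. Since the inner radius is $2c - c = c$, this produces
$$\int_{B(p,c)} Sc \leq n(n-1)\Big(\frac{\pi}{c}\Big)^2 \mathrm{vol}(B(p,2c)).$$
To convert the outer volume $\mathrm{vol}(B(p,2c))$ into $\mathrm{vol}(B(p,c))$, I would invoke Bishop--Gromov: under $Ric(g) \geq 0$ the ratio $r \mapsto \mathrm{vol}(B(p,r))/r^n$ is non-increasing, whence $\mathrm{vol}(B(p,2c)) \leq 2^n \mathrm{vol}(B(p,c))$. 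Substituting and dividing by $\mathrm{vol}(B(p,c))$ yields the asserted bound $\tfrac{2^n n(n-1)}{c^2}$, up to the factor $\pi^2$ carried by (\ref{conjugateiscalarintegral}).

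I do not anticipate any genuine obstacle: the entire analytic content sits in Proposition \ref{completegreen}, and the corollary is just the specialization obtained by choosing the outer radius to be twice the injectivity radius and clearing the volume factor with the elementary comparison above. The only point deserving a word of care is notational, namely that the symbol $c$ plays two roles (the inward shift in (\ref{conjugateiscalarintegral}) and the injectivity radius in the statement); these are made to coincide precisely by taking $R = 2c$.
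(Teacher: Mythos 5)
Your derivation is exactly the one the paper intends (the corollary is stated without proof immediately after Proposition \ref{completegreen}): take $R=2c$ in the estimate (\ref{conjugateiscalarintegral}), note $c=\mathrm{inj}(M)\leq\mathrm{conj}(M)$ so the shift parameter $c$ is admissible, and clear $\mathrm{vol}(B(p,2c))\leq 2^n\mathrm{vol}(B(p,c))$ by Bishop--Gromov. The only point of note is the one you flag yourself: this argument yields the constant $2^n n(n-1)\pi^2/c^2$, so the $\pi^2$ appears to have been dropped from the constant as printed in the corollary rather than being removable by this route.
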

From the perspective of Cheeger-Colding theory and Anderson's $C^\alpha$ convergence, it is too strong that we  assume that the injectivity radius has a uniformly lower bound. But, if you pay more attention to the generalized scalar curvature on Ricci limiting space, we still do not have a systematic way to introduce a useful scalar curvature on this singular space. Probably, this inequality may help study the Ricci limiting space for non-collapsing case in the future.

\section{Proof of Theorems}
In this section, we will prove Theorem \ref{polecase}, \ref{mainvolumegrowth} and \ref{width}. For the proof of Theorem \ref{polecase}, we will use the geometrically relative Bochner formula along the distance function and the stability of ray. For the proof of Theorem \ref{mainvolumegrowth}, we will combine the Gromov-Hausdorff convergence with the estimate of torical band to obtain the volume estimate. For the proof of Theorem \ref{width}, we analyze the level set of Busemann function to obtain the existence of function required.






\bigskip

\subsection{Proof of Theorem \ref{polecase}}

\begin{theorem}
Let $(M^3,g)$ be a complete, non-compact three-dimensional Riemannian manifold with a pole $p$ and $Ric(g) \geq 0$. Then
\begin{equation}
    \limsup_{r \rightarrow \infty } \frac{1}{r} \int_{B(p,r)}Sc \leq  20\pi. 
\end{equation}

\begin{proof}
Let's first define $f(x):=d(p,x)$, on each level set $L_t^f$, we have the following type of Gauss equation called S-Y trick on minimal surface \cite{SY}
$$2\Bar{K}= Sc-2Ric(\nu,\nu) + G.$$
Here, $\bar{K}$ is the Gauss curvature of the level set  $L_t^f$.
Hence,
$$Sc= 2\Bar{K}+2Ric(\nu,\nu) -G.$$
Then integrating it over $B(a,b)=B(p,a,b)$, we obtain,
$$\int_{B(a,b)} Scd\mathcal{H}^3= \int_{B(a,b)} (2\Bar{K} + 2Ric(\nu,\nu) -G)d\mathcal{H}^3.$$
By Lemma \ref{bf}, we obtain,
$$ \int_{B(a,b)} G d\mathcal{H}^3 = \int_{B(a,b)} Ric(\nu,\nu) d\mathcal{H}^3 - \int_{L^f_b} H +\int_{L^f_a}H .$$
Hence,
$$\int_{B(a,b)} Sc \ d\mathcal{H}^3=2 \int_{B(a,b)} \bar{K} d\mathcal{H}^3+ \int_{B(a,b)} Ric(\nu,\nu) d\mathcal{H}^3 +\int_{L^f_b} H -\int_{L^f_a}H.$$

By the coarea formula and Gauss Bonnet theorem on each level set surface, we have,
$$ \int_{B(a,b)} \bar{K} d\mathcal{H}^3 = \int_a^b \int_{\partial B(r)} \bar{K} d\mathcal{H}^2 d r = \int_a^b 2\pi \chi(\partial B(r))dr= 4\pi(b-a).$$
Here, we used  $\partial B(r)$ is a topological sphere for any $r \in [a,b]$. Furthermore, by the volume comparison theorem and its proof, we have,
$$\int_{L^f_b} H  \leq 4\pi b. $$
Combining above all estimate and Lemma \ref{ricciestimate}, we get
$$\int_{B(a,b)} Sc d\mathcal{H}^3 \leq 8\pi(b-a) + 8\pi(b-a) + 4\pi b- \int_{L^f_a}H. $$
By taking $b \rightarrow \infty$ and then $a \rightarrow 0$, we have,
$$\limsup_{r \rightarrow \infty} \frac{1}{r}\int_{B(p,r)} Sc \ d\mathcal{H}^3 \leq  20\pi. $$
\end{proof}
\end{theorem}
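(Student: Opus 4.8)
The plan is to relate the ambient scalar curvature to the intrinsic geometry of the geodesic spheres through the traced Gauss equation, and then to absorb the resulting terms using the two identities already in hand, Lemma \ref{ricciestimate} and the geometric Bochner formula Lemma \ref{bf}. First I would set $f(x) = d(p,x)$. Because $p$ is a pole, $\exp_p$ is a diffeomorphism, so $f$ is smooth on $M \setminus \{p\}$ with no critical points, every geodesic sphere $\partial B(p,r) = L^f_r$ is a smoothly embedded topological $2$-sphere, and the outward unit normal $\nu = \nabla f$ is globally defined. On each level set the Schoen--Yau form of the Gauss equation reads $2\bar{K} = Sc - 2Ric(\nu,\nu) + G$, where $\bar{K}$ is the Gauss curvature of $L^f_r$ and $G = H^2 - |A|^2$; equivalently $Sc = 2\bar{K} + 2Ric(\nu,\nu) - G$.

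Next I would integrate this identity over the annular region $B(a,b) = \{\, a \le f \le b \,\}$ and treat the three terms on the right separately. The Gaussian curvature term is handled by the coarea formula together with Gauss--Bonnet: since each $\partial B(p,r)$ is a topological sphere, $\int_{\partial B(p,r)} \bar{K} = 2\pi\chi(S^2) = 4\pi$, so that $\int_{B(a,b)} \bar{K} = 4\pi(b-a)$. The normal Ricci term is exactly what Lemma \ref{ricciestimate} controls, namely $\int_{B(p,b)} Ric(\nu,\nu) \le 8\pi b$.

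The \emph{crucial} step is the $\int G$ term, since $G$ contains the squared second fundamental form $|A|^2$ of the geodesic spheres, over which we have no pointwise control. This is precisely where the geometric Bochner formula Lemma \ref{bf} is decisive: it replaces $\int_{B(a,b)} G$ by $\int_{B(a,b)} Ric(\nu,\nu) - \int_{L^f_b} H + \int_{L^f_a} H$, trading the intractable $|A|^2$ for the normal Ricci curvature plus boundary mean-curvature fluxes. Substituting, the two Ricci contributions partially cancel and one is left with $\int_{B(a,b)} Sc = 2\int_{B(a,b)} \bar{K} + \int_{B(a,b)} Ric(\nu,\nu) + \int_{L^f_b} H - \int_{L^f_a} H$. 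The surviving boundary term $\int_{L^f_b} H$ is bounded by the volume comparison theorem: since $Ric \ge 0$ forces $H = \Delta f \le 2/r$ pointwise while $\mathrm{Area}(\partial B(p,r)) \le 4\pi r^2$, the flux $\int_{L^f_b} H$ grows at most linearly in $b$, and $\int_{L^f_a} H \to 0$ as $a \to 0$.

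Finally I would collect the estimates, noting that each surviving term is $O(b)$, divide by $b$, and let $a \to 0$ and $b \to \infty$ to obtain the stated bound $\limsup_{r\to\infty} r^{-1}\int_{B(p,r)} Sc \le 20\pi$. I expect the main obstacle to be exactly the $|A|^2$ term: without the geometric Bochner identity there is no mechanism to bound the integral of the squared second fundamental form of the level spheres, and the pole hypothesis is used throughout to guarantee that $f$ is globally smooth with no cut locus and that Gauss--Bonnet applies to honest $2$-spheres with Euler characteristic $2$.
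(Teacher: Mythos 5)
Your proposal is correct and follows essentially the same route as the paper: the Schoen--Yau traced Gauss equation on the level spheres of $d(p,\cdot)$, Lemma \ref{bf} to convert $\int G$ into the normal Ricci term plus mean-curvature boundary fluxes, Gauss--Bonnet with the coarea formula for $\bar{K}$, Lemma \ref{ricciestimate} for $\int Ric(\nu,\nu)$, and volume comparison for $\int_{L^f_b} H$. The only (harmless) discrepancy is in the bookkeeping of that last flux: your pointwise bound $H \leq 2/b$ together with $\mathrm{Area}(\partial B(p,b)) \leq 4\pi b^2$ yields $\int_{L^f_b} H \leq 8\pi b$ rather than the paper's stated $4\pi b$, which changes only the numerical constant, not the finiteness of the $\limsup$.
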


\subsection{Proof of Theorem \ref{mainvolumegrowth}}

\begin{theorem}
Let $(M^3,g)$ be a complete, non-compact three-dimensional Riemannian manifold with $Ric(g) \geq 0$ and $Sc(g) \geq 6$. Then, for any $p \in M$, we obtain,
\begin{equation}
\limsup_{R \rightarrow \infty} \frac{\text{vol}(B(p,R))}{R} < \infty,
\end{equation}
provided that $\text{vol}(B(q,1)) \geq \epsilon >0$ for all $q$.
\end{theorem}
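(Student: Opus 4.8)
The plan is to argue by contradiction, transferring the rigidity of positive scalar curvature from $M$ itself to the structure of a blow-down (Ricci) limit space, so that the scalar curvature obstructions of G-WXY can be applied. Suppose the conclusion fails, so that $\limsup_{R\to\infty}\mathrm{vol}(B(p,R))/R=\infty$. Then there is a sequence $R_i\to\infty$ with $\mathrm{vol}(B(p,R_i))=c(R_i)R_i$ and $c(R_i)\to\infty$, which is exactly the setting of Lemma \ref{independentgeodesic} with $n=3$, $k=1$. Applying that lemma yields base points $q_i$ so that $(M,q_i)$ converges in the pointed Gromov-Hausdorff sense to a limit that splits off at least $\mathbb{R}^{k+1}=\mathbb{R}^2$; write it as $Y\times\mathbb{R}^2$. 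Since $\mathrm{vol}(B(q,1))\geq\epsilon$ uniformly, the convergence is non-collapsed, so by Cheeger-Colding theory \cite{cc} the limit has Hausdorff dimension $3$ and $Y$ is $1$-dimensional.

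First I would pin down the topology of the limit. By the McLeod-Topping characterization of three-dimensional non-collapsed Ricci limit spaces \cite{MT}, the limit $Y\times\mathbb{R}^2$ is a topological $3$-manifold, and in fact the convergence can be upgraded to smooth convergence on the regular part via Ricci-flow smoothing. Hence $Y$ is a $1$-manifold without boundary, so $Y\cong\mathbb{R}$ or $Y\cong S^1$, i.e. the limit is either $\mathbb{R}^3$ or $\mathbb{R}^2\times S^1$. The whole point is now to exclude both possibilities using $Sc(g)\geq 6=3\cdot 2=n(n-1)$, a bound that persists on every region of $M$.

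For the case $\mathbb{R}^2\times S^1$: a large Euclidean annulus $\{\rho\leq|x|\leq 2\rho\}$ in the $\mathbb{R}^2$-factor, crossed with the $S^1$-factor, is diffeomorphic to a torical band $T^2\times I$ whose two boundary tori lie at distance $\sim\rho$. Because the convergence is smooth on the regular part, for $i$ large this region pulls back to an embedded torical band $\Omega_i\subset M$ with $d(\partial_+\Omega_i,\partial_-\Omega_i)\gtrsim\rho$. As $\Omega_i$ inherits $Sc\geq 6$, Theorem \ref{toricalband} forces $d(\partial_+\Omega_i,\partial_-\Omega_i)<2\pi/3$, a contradiction once $\rho>2\pi/3$. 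For the case $\mathbb{R}^3$: for $i$ large a ball $B(q_i,\rho_i)\subset M$ with $\rho_i\to\infty$ is close to a Euclidean ball of radius $\rho_i$, and collapsing its boundary to a point gives a degree-one map $f_i:B(q_i,\rho_i)\to\mathbb{S}^3$ with $\mathrm{Lip}(f_i)\to 0$, since a fixed sphere is spread over a ball of radius $\rho_i$. This contradicts the spherical Lipschitz bound of Theorem \ref{slb}, which gives $\mathrm{Lip}(f_i)>c/(\pi\sqrt{3})$ for a fixed $c>1/3$. Either way we reach a contradiction, so the linear volume growth bound holds.

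I expect the main obstacle to be the passage from the weak Gromov-Hausdorff convergence to the genuinely geometric objects needed in the final step: an honestly embedded torical band in $M$ with a lower bound on the separation of its boundary components, respectively a map to $\mathbb{S}^3$ with a quantitative upper bound on its Lipschitz constant. Both require upgrading GH closeness to smooth (or at least bi-Hölder) control, which is precisely where the three-dimensional manifold structure of the limit from \cite{MT} is essential; without it one would be forced to define scalar curvature on a singular limit space, which is exactly what this argument is designed to avoid.
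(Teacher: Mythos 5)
Your proposal follows essentially the same route as the paper: contradiction via Lemma \ref{independentgeodesic}, the McLeod--Topping manifold structure of the three-dimensional Ricci limit to reduce to $\mathbb{R}^3$ or $S^1\times\mathbb{R}^2$, and exclusion of both cases by Gromov's torical band inequality and the spherical Lipschitz bound (the paper itself offers the latter as the alternative treatment of the $\mathbb{R}^3$ case, citing the non-collapsed part of Corollary \ref{bnvolumegrowth}). Your closing caveat about upgrading Gromov--Hausdorff closeness to usable geometric control is exactly the point the paper handles by using that $\varphi_i$ is a diffeomorphism onto its image together with uniform convergence of distances, rather than any smooth convergence of the metrics.
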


\begin{proof}
Let's show that it is sufficient to prove that there exists one $p \in M$ such that
\begin{equation}\label{onepoint}
\limsup_{R \rightarrow \infty} \frac{\text{vol}(B(p,R))}{R} < \infty.
\end{equation}
We assume that the estimate (\ref{onepoint}) holds for $p$, then for any $q \in M$, we have
$$\frac{\text{vol}(B(q, r))}{r} \leq  \frac{\text{vol}(B(p, r+2d(p,q)))}{r} = \frac{\text{vol}(B(p, r+2d(p,q)))}{r+2d(p,q)} \frac{r+2d(p,q)}{r}.$$
Then, by taking the $\limsup$ on both sides, we have,
$$\limsup_{R \rightarrow \infty} \frac{\text{vol}(B(q, R))}{R} < \infty.$$

Now, let's prove inequality (\ref{onepoint}) by contradiction argument: Suppose that there exists a sequence of $p_i\in M,R_i \in \mathbb{R}$ such that $p_i \rightarrow \infty$ and $R_i\rightarrow \infty$ and
$$\frac{\text{vol}(B(p_i,R_i))}{R_i} \rightarrow \infty, i \rightarrow \infty.$$
By the Lemma \ref{independentgeodesic}, there exists at least $2$ rays that are independent and orthogonal at each $p_i$. By the main theorem in \cite{MT} and Cheeger-Colding theory \cite{cc}, there exists a subsequence $\{q_i\}$ of $\{p_i\}$ such that
\begin{equation}\label{gseq}
    (M^3,p_i,g_i) \rightarrow (M_\infty, p_\infty, d),
\end{equation}
in the sense of Gromov-Hausdorff convergence with the following properties ($**$):
\begin{itemize}
    \item $M_\infty$ is a smooth manifold. Notice that the topological regularity is only known for $n=3$. Here, smooth manifold means that $M_\infty$ is a smooth differential manifold topologically, and we do not know anything about the deep metric structure of the Ricci limiting space. In fact, we mainly use the topological structure in our paper;
    \item For any $p_\infty$, there exists a sequence of smooth maps $$\varphi_i : B_d(p_\infty, i) \rightarrow M_i,$$
    such that $B_d(p_\infty, i)$ is diffeomorphic onto $\varphi_i(B_d(p_\infty, p))$ and $\varphi_i(p_\infty) =q_i$. Here $B_d(p_\infty,i)$ is the metric ball in $(M_\infty, d)$ with radius $i$ and center $p_\infty$;
    \item Under the above item, for any $R>0$, $$d_{g_i}(\varphi_i(x),\varphi_i(y)) \rightarrow d(x,y),$$
    uniformly on $B_d(p_\infty, R)$ as $i \rightarrow \infty$. Hence, the convergence is at least $C^0$ convergence only in the sense of metric space. 
\end{itemize}

\begin{remark}

For the notation used in \ref{gseq}: actually, $g_i =g$,  we write it as $g_i$  since we want to match $g_i$ with the base point $p_i$. Moreover, we do not know if the Riemannian metric $g_i$ will $C^0$-converge to a smooth Riemannian metric.
\end{remark}

\vskip 2mm

Moreover, for the limiting space $M_\infty$, we have the following two cases by Lemma \ref{independentgeodesic}: $M^1 \times \mathbb{R}^2$, $\mathbb{R}^3$.
\begin{itemize}

   \item \textbf{Case 1:} If 
   $$M_\infty \simeq M^1\times \mathbb{R}^2.$$
   isometrically, we will have $M^1$ is one dimensional, topologically, smooth manifold. This implies that $M^1=\mathbb{S}^1$. Hence, $M_\infty \simeq  \mathbb{S}^1 \times R^2$. Here, we merely obtain that the manifold is smooth in the sense of topology. However, we did not know if the metric $d_\infty $ on $M^1\times \mathbb{R}^2$ is induced by a smooth Riemannian metric. Now we can overcome these difficulties as follows.
   
   On the limiting space $\mathbb{S}^1\times \mathbb{R}^2$, we consider the set
   $$T=B_d(p_\infty, 2R)-B_d(p_\infty. R).$$
   As $R$ is a large, fixed number, i.e., $R \geq 100$,   $T$ is $T^2 \times [R,2R]$ topologically and
   $d(\partial_+ T, \partial_-T)=R$ under the metric $d$. Since we know that, as  $i \geq 10R$, $T \subset B_d(p_\infty, i)$ and $B_d(p_\infty, i)$ is diffeomorphic onto $\varphi_i(B_d(p_\infty, i))$, we reach that $\varphi_i(T)$ is a torical band in $M_i$ with a minor damage on the band distance. Moreover, we can always perturb $\varphi_i(T)$ to $K$ such that $K$ becomes a smooth manifold and its topology is kept fixed and 
   $$d_{g_i}(\partial_+K, \partial_- K) \geq \frac{1}{2}R.$$ 
   Hence, we obtain a compact Riemannian manifold with boundary
   $$(K=\mathbb{T}^2 \times I, g_K =g|_{K}, Sc(g_K) \geq 6, d(\partial_+K , \partial_-K) \geq 25).$$
   This contradicts with the Gromov's torical band estimate Theorem \ref{toricalband}.
   
   \bigskip
    \item \textbf{Case 2:} Otherwise,  by Cheeger-Colding theory in \cite{cc},
    $$M_\infty \simeq \mathbb{R}^3,$$ 
    isometrically. Since the limiting space is $\mathbb{R}^3$, we can always pick a big torical band in $\mathbb{R}^n$ and then proceed the same argument in case $1$ to reach a contradiction. Here, we will not repeat the argument again since it is totally the same as case $1$.  In fact, we may also use the argument in the proof of volume non-collapse in the Corollary \ref{bnvolumegrowth} below.

\end{itemize}
Together with all arguments above, we proved that for any $p \in M^3$, 
$$\limsup_{R \rightarrow \infty} \frac{\text{vol}(B(p,R))}{R} < \infty.$$
This completes our proof.
\end{proof}

\begin{remark}
In fact, by the proof, we see that the manifold with non-negative Ricci curvature, positive scalar curvature and volume non-collapse is asymptotic to $\mathbb{S}^2 \times [R, \infty)$ at infinity or splits globally. In fact, this can be also seen from the perspective of the minimal surface argument.
\end{remark}
By the proof of Theorem \ref{mainvolumegrowth}, we have the following weaker version of volume growth in higher dimension.

\subsection{The Proof of Corollary \ref{bnvolumegrowth}}

\begin{proof}

\begin{itemize}
\item \textbf{Proof of volume non-collapsed case}

As the proof of Theorem \ref{mainvolumegrowth}, we assume that the result does not hold. Hence, there exists a sequence $p_i \in M$ such that $(M, p_i)$ pointedly Gromov Hausdorff converges to $\mathbb{R}^n$. However, we do not know if the convergence is $C^0$ convergence or not in the sense of Riemannian metric convergence. Hence, we can not directly use Gromov's upper semi-continuity of scalar curvature under Riemannian metric $C^0$ convergence. Instead, we make use of an estimate in \cite{metricinequality}

First,  for all $\epsilon > 0$, there exists a Lipschitz map $f$ from $\mathbb{R}^n$ to the standard unit sphere $\mathbb{S}^n$ with $deg(f)\geq 1$ , $Lip(f) \leq \epsilon$ and $f$ is constant at infinity. Namely, $f$ is constant on $B^c(R) \subset \mathbb{R}^n$. Then, since we have, $(M_i, r_i)$ converges to $\mathbb{R}^n$ with respect to the Gromov Hausdorff convergence. Hence, there exists a map $\varphi_i : (M_i, r_i) \rightarrow (\mathbb{R}^n,0) $ with Lipschitz constant $Lip(\varphi_i) \leq 2$ and $B(2R) \subset Im(\varphi_i({B(r_i, 3R)}) $ for large $i$. Here $B(r_i, 2R)$ is the geodesic ball in $M_i$ centered at $r_i$. Finally, we construct a map $F_i= f\circ \varphi_i : M_i \rightarrow \mathbb{S}^n$ with $Lip(F) \leq 2\epsilon$ with $Sc(M_i) \geq n(n-1)$ and $deg(f_i) \geq 1$. If we pick  $\epsilon$ small enough, then what we obtain  contradicts with the Spherical Lipschitz Bound Theorem \ref{slb} (cited from \cite{metricinequality}). Hence,
$$\limsup_{R \rightarrow \infty} \frac{vol(B(p,R))}{R^{n-1}} < \infty.$$

\item \textbf{Proof of injectivity radius non-collapsed case}

As the proof of Theorem \ref{mainvolumegrowth}, we assume that the result does not hold. Hence, there exists a sequence $p_i \in M$ such that $(M, p_i)$ $C^\alpha, \alpha \in (0,1)$ converges to a smooth manifold $\mathbb{S}^1 \times \mathbb{R}^{n-1}$ since we assume that the injectivity radius has a uniformly positive lower bound \cite{ac}. Then, we take a torical band $T^{n-1} \times [R,2R]$ in $\mathbb{S}^1 \times \mathbb{R}^{n-1}$ for large $R$. Since the convergence is $C^\alpha$, we have that the properties $(**)$ are automatically satisfied by Anderson's result in \cite{ac}. Hence, we will reach a contradiction, since the following steps follow the same argument in the proof of Theorem \ref{mainvolumegrowth}, we have
$$\limsup_{R \rightarrow \infty} \frac{vol(B(p,R))}{R^{n-2}} < \infty.$$

\end{itemize}

\end{proof}

\subsection{Proof of Theorem \ref{width}}
\begin{proof} Since we assume that $(M^3,g)$ has nonnegative Ricci curvature, we have $(M,g)$ has at most $2$ ends.
\begin{itemize}
    \item If $(M^3,g)$ has $2$ ends, we have $(M,g)$ is split. i.e. $$M^3=\mathbb{S}^2 \times \mathbb{R}.$$ 
    In this case, we take the function $f$ as the projection $\mathbb{S}^2 \times \mathbb{R}$ to $\mathbb{R}$. Since $Sc(g) \geq 2$, it is trivial that for any $r \in \mathbb{R}$. $diam(f^{-1}(r)) \leq 4\pi$ and $f^{-1}(r)$ is a $2$ sphere and hence connected;
    
    \item If $(M^3,g)$ has $1$ end,  we take any ray $\gamma(t) \in M$ and obtain the associated Busemann function
    $$B_{\gamma}(x): M \rightarrow \mathbb{R}.$$
     \textbf{Claim:} $f(x) = B_{\gamma}(x)$ is a continuous function as required.

    
    Assume that there exists a sequence $r_i \rightarrow \infty$ such that, $diam(f^{-1}(r_i)) \rightarrow \infty.$
    By the proof of Theorem \ref{mainvolumegrowth}, there exists a subsequence $p_i \in f^{-1}(r_i) $ such that $(M, p_i)$ pointedly Gromov Hausdorff converges to a length space(smooth manifold) $(M_\infty=X^2_\infty \times \mathbb{R}, p_\infty, d_\infty)$ and $X_\infty$ is a compact manifold. Then, we take a large metric ball $B(p_\infty, 10R) \subset M_\infty$ such that for large $i$, $5R \geq diam(f^{-1}(r_i)) \geq R$ but the level set $f^{-1}(r_i)$ is contained into some neighborhood of $\gamma$: $N_s(\gamma)= \{x \in M, d_g(x, \gamma) \leq s\}$. Here, $s$ only depends on $diam(X_\infty^2)$. 
    
    Since we assume that $\text{diam}f^{-1}(r_i)$ diverges to $\infty$ and we know $\gamma(r_i) \in f^{-1}(r_i)$, then, for the large $i$ picked above, we take a point $y_i \in f^{-1}(r_i)$ such that $d_g(y_i, \gamma(r_i))$ is large. Moreover, we pick $q_i \in \gamma$ such that $d(y_i, \gamma)= d(q_i, y_i) $ that is small relative to the $r_i$.  Hence, for any large $t \geq r_i$, by the definition of Busemann function, we have 
    $$r_i= f(y_i)= B_\gamma(y_i) > t-d(y_i,\gamma(t).$$
    If we initially pick $i$ large enough such that $d_{GH}(B(p_i,10R), B(p_\infty, 10R)$ is small, we obtain that for large $t$
    $$r_i\geq  t-d(y_i,\gamma(t)) > r_i+1.$$
    Hence, we reach a contradiction. We conclude that there exists a constant $c$ such that $diam(f^{-1}(r)) \leq c$.
\end{itemize}
\end{proof}

\begin{remark}
Geometrically, the proof is very clear. If we keep  $X_\infty \times \mathbb{R}$ in mind, it would be natural to argue the level set is uniformly bounded even the proof seems indirect.
\end{remark}

\bigskip

\begin{remark}
This article is a revised version of my original article, which was finalized in May 2021 and submitted to a Journal in September 2021.
\end{remark}

\end{document}